\newtheorem{thm}{Theorem}[section]
\newtheorem{defn}[thm]{Definition}
\newtheorem{pro}[thm]{Proposition}
\newtheorem{lem}[thm]{Lemma}
\newtheorem{case}{Case}
\def\cl{\centerline}
\def\vs{\vspace*}
\newcommand{\C}{\mathbb{C}}
\newcommand{\Z}{\mathbb{Z}}
\def\pa{\partial}
\numberwithin{equation}{section}
\begin{document}
\begin{center}
{\bf The Lie conformal algebra of a Block type Lie algebra}
\footnote {Supported by NSF grant no. 10825101 , 11001200 and 11071187 of China and the Fundamental Research Funds for the Central Universities.

Corresponding author: Xiaoqing Yue(xiaoqingyue@tongji.edu.cn).}
\end{center}

\cl{{Ming Gao$^{\dag}$},\ \ \ {Ying Xu$^{\dag}$},\ \ \ Xiaoqing Yue$^{\,\ddag}$}

\cl{\small
$^{\dag\,}$Wu Wen-Tsun Key Laboratory of \vs{-2pt}Mathematics}
\cl{\small  University of Science and
Technology of China, Hefei 230026, China}
\cl{\small $^{\ddag\,}$Department of Mathematics, Tongji
University, Shanghai 200092, China}

\cl{\small E-mail: nina@mail.ustc.edu.cn, xying@mail.ustc.edu.cn, xiaoqingyue@tongji.edu.cn}\vs{5pt}

{\small
\parskip .005 truein
\baselineskip 3pt \lineskip 3pt \noindent{\bf Abstract.}
Let $L$ be a Lie algebra of Block type over $\C$ with basis $\{L_{\alpha,i}\,|\,\alpha,i\in\Z\}$ and brackets $[L_{\alpha,i},L_{\beta,j}]=(\beta(i+1)-\alpha(j+1))L_{\alpha+\beta,i+j}$. In this paper, we shall construct a formal distribution Lie algebra of $L$. Then we decide its conformal algebra $B$ with $\C[\partial]$-basis $\{L_\alpha(w)\,|\,\alpha\in\Z\}$ and $\lambda$-brackets $[L_\alpha(w)_\lambda L_\beta(w)]=(\alpha\partial+(\alpha+\beta)\lambda)L_{\alpha+\beta}(w)$. Finally, we give a classification of free intermediate series $B$-modules. \vs{5pt}

\noindent{\bf Key words:} Block type Lie algebras, Lie conformal algebras, module over Lie conformal algebras, free intermediate series module

\noindent{\it Mathematics Subject Classification (2010):} 17B68, 17B69.}
\parskip .001 truein\baselineskip 6pt \lineskip 6pt
\vs{10pt}

\section{Introduction}
Lie conformal algebra encodes an axiomatic description of the operator product expansion of chiral fields in conformal field theory. Kac introduced the notion of the conformal algebra in \cite{K1, K2}. It turns out to be an adequate tool for the study of some infinite dimensional Lie algebras. Fattori and Kac gave a complete classification of finite simple Lie conformal superalgebras in \cite{FK}. And the cohomology theory of finite simple Lie conformal superalgebras was developed in \cite{BDK, BKV}.

In this paper, we would like to construct and study a new Lie conformal algebra related to Block type Lie algebras. Block type Lie algebras were first introduced by Block in \cite{B}. Let $L$ be a Lie algebra over $\C$ with basis $\{L_{\alpha,i}\,|\,\alpha,i\in\Z\}$ and Lie brackets defined by
\begin{equation}\label{brackets}
[L_{\alpha,i},L_{\beta,j}]=(\beta(i+1)-\alpha(j+1))L_{\alpha+\beta,i+j}.
\end{equation}
Then $L$ is called {\it a Lie algebra of Block type}, whose general definitions were given in \cite{DZ}, which also comes up as a generalization of the Lie algebra studied in \cite{Su,Su2}. Block type Lie algebras have been widely studied in mathematical literatures(\cite{DZ},\,\cite{OZ}--\cite{X},\,\cite{ZZ}--\cite{ZM}). However, there is little about its conformal algebra. We believe this article would play
an energetic role on the study of Block type Lie algebras.

This paper proceeds as follows. In section 2, the preliminaries of conformal algebra are recalled. In Section 3, we will start from $L$ to construct its Lie algebra of formal distributions. Then we shall construct the related Lie conformal algebra $B$, which is our main interest of this paper. Finally, we will go on to study the representations of $B$ and give a classification of its free intermediate series modules in Section 4.

The main results in this article are the following two theorems.

\begin{thm}\label{firstthm}
Let $B$ be a free $\C[\partial]$-module with basis $\{L_\alpha(w)\,|\,\alpha\in\Z\}$, that is, $B=\bigoplus_{\alpha\in\Z}\C[\partial]L_\alpha(w)$. We define a $\lambda$-bracket on the basis of $B$ as
\begin{equation*}
[L_\alpha(w)_\lambda L_\beta(w)]=(\alpha\partial+(\alpha+\beta)\lambda)L_{\alpha+\beta}(w),
\end{equation*}
and expand by relations $(\ref{lambda1})-(\ref{lambda4})$. Then $B$ becomes a Lie conformal algebra.
\end{thm}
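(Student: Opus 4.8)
To prove that $B$ is a Lie conformal algebra, I need to verify the axioms for a $\lambda$-bracket: $\C[\partial]$-sesquilinearity (conformal bilinearity), skew-symmetry, and the Jacobi identity. Sesquilinearity is forced by the defining relations $(\ref{lambda1})$--$(\ref{lambda4})$ once the bracket is prescribed on the $\C$-basis $\{L_\alpha(w)\}$, so the only content is to check that the prescribed bracket is consistent with these relations — that is, that skew-symmetry and Jacobi hold on the generators; they then propagate to all of $B$ automatically by sesquilinearity. So the proof reduces to two finite computations with the elements $L_\alpha(w)$, $L_\beta(w)$, $L_\gamma(w)$.

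\textbf{Skew-symmetry.} I must check $[L_\alpha(w)_\lambda L_\beta(w)] = -[L_\beta(w)_{-\lambda-\partial}L_\alpha(w)]$. The left side is $(\alpha\partial + (\alpha+\beta)\lambda)L_{\alpha+\beta}(w)$. For the right side, I start from $[L_\beta(w)_\mu L_\alpha(w)] = (\beta\partial + (\alpha+\beta)\mu)L_{\alpha+\beta}(w)$, then substitute $\mu = -\lambda-\partial$ with the convention that $\partial$ in the substituted expression acts on the coefficient $L_{\alpha+\beta}(w)$ to its right (this is the standard meaning of $[\,\cdot\,{}_{-\lambda-\partial}\,\cdot\,]$). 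This gives $-\bigl(\beta\partial + (\alpha+\beta)(-\lambda-\partial)\bigr)L_{\alpha+\beta}(w) = -\bigl(\beta\partial -(\alpha+\beta)\lambda - (\alpha+\beta)\partial\bigr)L_{\alpha+\beta}(w) = \bigl(\alpha\partial + (\alpha+\beta)\lambda\bigr)L_{\alpha+\beta}(w)$, matching the left side. Care with the sign conventions and with where $\partial$ acts is the only subtlety here.

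\textbf{Jacobi identity.} I must verify
\[
[L_\alpha(w)_\lambda [L_\beta(w)_\mu L_\gamma(w)]] = [[L_\alpha(w)_\lambda L_\beta(w)]_{\lambda+\mu}L_\gamma(w)] + [L_\beta(w)_\mu [L_\alpha(w)_\lambda L_\gamma(w)]].
\]
Each of the three terms is computed by applying the defining bracket twice, using sesquilinearity to pull the operator $\partial$ out of the inner bracket: for instance, $[L_\alpha(w)_\lambda (f(\partial)L_{\beta+\gamma}(w))] = f(\lambda+\partial)\,[L_\alpha(w)_\lambda L_{\beta+\gamma}(w)]$ up to the sign/shift dictated by $(\ref{lambda1})$--$(\ref{lambda4})$. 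After expanding, every term is a polynomial in $\lambda,\mu,\partial$ times $L_{\alpha+\beta+\gamma}(w)$, and the identity becomes a polynomial identity in three variables that should collapse after collecting coefficients of $\partial^2$, $\partial\lambda$, $\partial\mu$, $\lambda^2$, $\mu^2$, $\lambda\mu$, and so on.

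\textbf{Main obstacle.} The computation itself is routine, but the bookkeeping in the Jacobi check is the place where errors creep in: one must be scrupulous about the fact that in the term $[L_\alpha(w)_\lambda [L_\beta(w)_\mu L_\gamma(w)]]$ the inner bracket produces a coefficient $\beta\partial + (\beta+\gamma)\mu$ in which $\partial$ then gets replaced by $\lambda+\partial$ when the outer bracket acts (by sesquilinearity), whereas in $[[L_\alpha(w)_\lambda L_\beta(w)]_{\lambda+\mu}L_\gamma(w)]$ the inner $\partial$ acts directly and then the outer bracket carries the spectral parameter $\lambda+\mu$. Keeping these substitutions straight — essentially a careful application of the sesquilinearity relations $(\ref{lambda1})$--$(\ref{lambda4})$ at each step — is what makes the verification go through; once the coefficients are collected correctly, the three linear-in-$\partial$ expressions $\alpha$, $\beta$, $\gamma$ and the structure constants balance and the identity holds.
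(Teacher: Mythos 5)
Your approach is the same as the paper's: verify skew-symmetry and the Jacobi identity directly on the basis elements, using sesquilinearity to move $\partial$ through the inner brackets (replacing $\partial$ by $\partial+\lambda$ in the nested term and evaluating the outer bracket at $\lambda+\mu$ in the bracketed-first term), and your skew-symmetry computation coincides with the paper's line for line. The only shortfall is that you never actually perform the Jacobi expansion --- you describe the bookkeeping and then assert that the coefficients ``balance,'' whereas in a verification theorem of this kind that expansion \emph{is} the content of the step; for the record it does close, since one finds $[L_\alpha(w)_\lambda[L_\beta(w)_\mu L_\gamma(w)]]=(\beta(\partial+\lambda)+(\beta+\gamma)\mu)(\alpha\partial+(\alpha+\beta+\gamma)\lambda)L_{\alpha+\beta+\gamma}(w)$, $[[L_\alpha(w)_\lambda L_\beta(w)]_{\lambda+\mu}L_\gamma(w)]=(\beta\lambda-\alpha\mu)((\alpha+\beta)\partial+(\alpha+\beta+\gamma)(\lambda+\mu))L_{\alpha+\beta+\gamma}(w)$, and $[L_\beta(w)_\mu[L_\alpha(w)_\lambda L_\gamma(w)]]=(\alpha(\partial+\mu)+(\alpha+\gamma)\lambda)(\beta\partial+(\alpha+\beta+\gamma)\mu)L_{\alpha+\beta+\gamma}(w)$, and the first minus the second equals the third, exactly as the paper checks.
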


\begin{thm}\label{thm-mod}
Any nontrivial free intermediate series $B$-module must be isomorphic to one of the following three classes$:$\vspace{12pt}

\noindent $V_{C,D}=\bigoplus\limits_{\gamma\in\Z}\C[\partial]v_\gamma:$
  \[L_\alpha(w)_\lambda v_\gamma=((\alpha+\gamma+C)\lambda+\alpha(\partial+D))v_{\alpha+\gamma};\]
$V_D=\bigoplus\limits_{\gamma\in\Z}\C[\partial]v_\gamma:$
\[L_\alpha(w)_\lambda v_\gamma=\left\{ \begin{array}{ll}
((\alpha+\gamma)\lambda+\alpha(\partial+D))v_{\alpha+\gamma}&\mbox{if $\gamma\neq 0$ and $\alpha+\gamma\neq 0$,}\\[4pt]
\alpha (\lambda+\partial+D)^2v_{\alpha+\gamma}&\mbox{if $\gamma= 0$,}\\[4pt]
\alpha\, v_{\alpha+\gamma} &\mbox{if $\alpha+\gamma= 0;$}
 \end{array}  \right.
\]
$V'_D=\bigoplus\limits_{\gamma\in\Z}\C[\partial]v_\gamma:$
\[
L_\alpha(w)_\lambda v_\gamma=\left\{ \begin{array}{ll}
((\alpha+\gamma)\lambda+\alpha(\partial+D))v_{\alpha+\gamma}& \mbox{if $\gamma\neq 0$ and $\alpha+\gamma\neq 0$,}\\[4pt]
\alpha\, v_{\alpha+\gamma} & \mbox{if $\gamma= 0$,}\\[4pt]
\alpha (\partial+D)^2v_{\alpha+\gamma} & \mbox{if $\alpha+\gamma= 0$,}
 \end{array}  \right.
\]
where the module actions are defined on basis elements and $C,D\in\C$ are constants.
\end{thm}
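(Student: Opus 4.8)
The plan is to convert the module axioms into a single polynomial functional equation for the structure constants and then solve that equation by a case analysis separating the generic indices from the two exceptional ones, $\gamma=0$ and $\alpha+\gamma=0$. Let $V=\bigoplus_{\gamma\in\Z}\C[\partial]v_\gamma$ be a nontrivial free intermediate series $B$-module; the grading lets us write
\[
L_\alpha(w)_\lambda v_\gamma=f_{\alpha,\gamma}(\partial,\lambda)\,v_{\alpha+\gamma},\qquad f_{\alpha,\gamma}\in\C[\partial,\lambda].
\]
The sesquilinearity axioms $(\partial L_\alpha(w))_\lambda v=-\lambda\,L_\alpha(w)_\lambda v$ and $L_\alpha(w)_\lambda(\partial v)=(\partial+\lambda)\,L_\alpha(w)_\lambda v$ then hold automatically, so the only genuine constraint is the Jacobi identity $[L_\alpha(w)_\lambda L_\beta(w)]_{\lambda+\mu}v_\gamma=L_\alpha(w)_\lambda\big(L_\beta(w)_\mu v_\gamma\big)-L_\beta(w)_\mu\big(L_\alpha(w)_\lambda v_\gamma\big)$. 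Inserting the $\lambda$-bracket of Theorem \ref{firstthm} and using sesquilinearity to move polynomials in $\partial$ past the actions turns it into the identity in $\C[\partial,\lambda,\mu]$
\[
(\beta\lambda-\alpha\mu)\,f_{\alpha+\beta,\gamma}(\partial,\lambda+\mu)=f_{\beta,\gamma}(\partial+\lambda,\mu)\,f_{\alpha,\beta+\gamma}(\partial,\lambda)-f_{\alpha,\gamma}(\partial+\mu,\lambda)\,f_{\beta,\alpha+\gamma}(\partial,\mu),
\]
required for all $\alpha,\beta,\gamma\in\Z$; call it $(\star)$. The classification is then exactly the description of all solutions of $(\star)$, up to the rescalings $v_\gamma\mapsto c_\gamma v_\gamma$ with $c_\gamma\in\C^{\times}$, which are the only grading-preserving $B$-module isomorphisms.

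I would first harvest the easy specializations of $(\star)$. Setting $\lambda=\mu=0$ gives $f_{\beta,\gamma}(\partial,0)\,f_{\alpha,\beta+\gamma}(\partial,0)=f_{\alpha,\gamma}(\partial,0)\,f_{\beta,\alpha+\gamma}(\partial,0)$; taking $\alpha=0$ and then $\lambda=0$ forces $f_{0,\gamma}(\partial,0)$ to be independent of $\partial$ and of $\gamma$ on the support of the module, and the instances of $(\star)$ with $\alpha=0$ then pin down the $L_0(w)$-action, which must have the form $f_{0,\gamma}(\partial,\lambda)=(\gamma+C)\lambda$ for a single constant $C\in\C$ (the trivial module being excluded). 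Next I would use that $L_1(w)$ and $L_{-1}(w)$ generate $B$ together with $L_0(w)$: from $[L_1(w)_\lambda L_1(w)]=(\partial+2\lambda)L_2(w)$ and the brackets $[L_m(w)_\lambda L_{\pm1}(w)]$ one reaches every $L_n(w)$ with $n\neq0$, whereas $[L_1(w)_\lambda L_{-1}(w)]=\partial L_0(w)$ recovers only $\partial L_0(w)$. Hence the instances of $(\star)$ with $\alpha,\beta\in\{1,-1\}$ together with $\alpha=\beta=\pm1$ express every $f_{\alpha,\gamma}$ recursively through $f_{0,\gamma},f_{1,\gamma},f_{-1,\gamma}$, and the remaining instances of $(\star)$ become compatibility conditions. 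Comparing the leading $\lambda,\mu$-parts of $(\star)$ then bounds $\deg_\partial f_{\alpha,\gamma}\le1$ and $\deg_\lambda f_{\alpha,\gamma}\le1$, the bound failing only at the exceptional indices $\gamma=0$ and $\alpha+\gamma=0$, where the degree may be as large as $2$.

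With these bounds I would solve for $f_{1,\gamma}$ and $f_{-1,\gamma}$ using an ansatz of undetermined coefficients — affine at the generic indices, of degree $\le2$ at the two exceptional ones — substitute it into $(\star)$, and read off the resulting equations. After normalizing the $v_\gamma$ this gives, at the generic indices, $f_{\alpha,\gamma}(\partial,\lambda)=(\alpha+\gamma+C)\lambda+\alpha(\partial+D)$ for a second constant $D\in\C$; at $\gamma=0$ and at $\alpha+\gamma=0$ there is room for precisely one further degeneration, and tracking which one is consistent with all of $(\star)$ yields exactly the three families: $V_{C,D}$ (no degeneration), $V_D$ (the $\gamma=0$ column deformed to $\alpha(\lambda+\partial+D)^2$ and the $\alpha+\gamma=0$ row to $\alpha$, which forces $C=0$), and $V'_D$ (the mirror deformation, again forcing $C=0$). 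Conversely, a direct substitution of each of the three displayed families into $(\star)$ shows they are genuine $B$-modules, and a look at the degrees of the module actions shows the three families are genuinely distinct.

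The main obstacle is the bookkeeping around the exceptional column $\gamma=0$ and the exceptional row $\alpha+\gamma=0$: one must show that the extra degree-$2$ freedom is confined to those indices, that it threads consistently through every instance of $(\star)$ without spawning new parameters, that the deformations of the $\gamma=0$ column and of the $\alpha+\gamma=0$ row are rigidly linked, and that each of them forces $C=0$. This is settled by feeding the degenerate ansatz into the instances of $(\star)$ that couple an exceptional index to a generic one — for instance $\alpha=\beta=1$ with $\gamma=-1$, and $\alpha=2$, $\beta=-1$ with $\gamma=-1$ — and matching coefficients carefully; the rest of the argument is routine polynomial algebra.
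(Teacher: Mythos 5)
Your overall strategy coincides with the paper's: both reduce the classification to the functional equation \eqref{use} for the structure coefficients, determine $f_{0,\gamma}(\lambda,\partial)=(\gamma+C)\lambda$, locate the exceptional indices $\gamma=0$ and $\alpha+\gamma=0$, and finish by coefficient matching. But your outline skips over the one step that actually carries the proof. The claim that ``comparing the leading $\lambda,\mu$-parts of $(\star)$'' bounds $\deg_\lambda f_{\alpha,\gamma}\le 1$ and $\deg_\partial f_{\alpha,\gamma}\le 1$ at generic indices is both unproved and insufficient: even granting those separate bounds, $f_{\alpha,\gamma}$ could still contain a $\lambda\partial$ cross term with arbitrary coefficients, and at the exceptional indices a degree-two polynomial in two variables has six free coefficients, not three. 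What the paper actually establishes (Lemma \ref{lemm-F}) is much stronger: setting $\beta=0$ in \eqref{use}, once $f_{0,\gamma}=(\gamma+C)\lambda$ is known, yields the first-order equation $\alpha\,\partial f_{\alpha,\gamma}/\partial\lambda=(\alpha+\gamma+C)\,\partial f_{\alpha,\gamma}/\partial\partial$, whose polynomial solutions are exactly the polynomials in the single linear form $(\alpha+\gamma+C)\lambda+\alpha\partial$. Everything downstream --- the degree equation $\alpha(\alpha+\gamma+C)^{n}=(\alpha+\gamma+C)\alpha^{n}$, the identification of $\gamma=0$ and $\alpha+\gamma=0$ as the only places where degree $2$ can occur, and the specific shapes $\alpha(\lambda+\partial+D)^2$ and $\alpha(\partial+D)^2$ --- rests on this one-variable reduction, and without it your undetermined-coefficient computation is badly under-determined. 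Note also that $f_{0,\gamma}=(\gamma+C)\lambda$ is not ``pinned down'' as easily as you suggest: one must exclude constant terms and higher-degree terms in $\mu$ from $h_\gamma(\mu)=f_{0,\gamma}(\mu,\cdot)$, which the paper does by a divisibility argument exploiting the existence of a nonconstant $f_{\alpha_0,\gamma_0}$.

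The second gap is that you never treat the possibility that some structure coefficient $f_{\alpha,\gamma}$ with $\alpha\neq0$ vanishes identically. Your recursion through $f_{\pm1,\gamma}$ and your nonvanishing affine ansatz tacitly assume this never happens, but nontriviality of $V$ does not obviously preclude it: a priori $V$ could split into ``truncated'' submodules supported on half-lines of the grading. The paper needs Lemmas \ref{pre4}, \ref{strive1} and \ref{nonzero} to show that such vanishing propagates across the whole lattice of indices and forces $V$ to be trivial; only after that is the nonvanishing ansatz legitimate. As it stands your proposal is a plausible roadmap that reproduces the paper's answer, but these two points are the substance of the proof and cannot be dismissed as routine polynomial algebra.
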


Note that $V_{C,D}$ is isomorphic to $V_{C+n,D}$ for any $n\in\Z$.

\section{Preliminaries}
First we shall list some definitions and results related to formal distributions and $\lambda$-products.

\begin{defn}\label{defi-value}
Let $g$ be a Lie algebra. A formal distribution is called {\bf $g$-valued} if its coefficients are all in $g$.
\end{defn}
We define the {\bf Dirac's $\delta$ distribution} to be
\begin{equation*}
\delta(z,w)=\sum\limits_{n\in\Z}z^{-1}(\frac{w}{z})^n=\sum\limits_{n\in\Z}w^{-1}(\frac{z}{w})^n.
\end{equation*}
We already have the following lemma about Dirac's $\delta$ distribution.
\begin{lem}\label{lem0}
If $m,n\in\mathbb{N}$ and $m>n$, then $(z-w)^m\partial_w^n\delta(z,w)=0$.
\end{lem}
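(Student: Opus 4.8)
The plan is to reduce the stated family of identities, indexed by the inequality $m>n$, to the single \emph{sharp} identity $(z-w)^{n+1}\partial_w^n\delta(z,w)=0$ for each $n\in\mathbb{N}$, and then to prove this sharp form by induction on $n$. Granting the sharp identity, the general case is immediate: whenever $m>n$ we factor
\[
(z-w)^m\partial_w^n\delta(z,w)=(z-w)^{m-n-1}\,(z-w)^{n+1}\partial_w^n\delta(z,w)=0,
\]
since the exponent $m-n-1$ is a nonnegative integer. Thus all the content is concentrated in the case $m=n+1$.

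First I would record the expansion $\delta(z,w)=\sum_{k\in\Z}z^{-k-1}w^k$ read straight off the definition, and settle the base case $n=0$. Multiplying out and reindexing,
\[
(z-w)\delta(z,w)=\sum_{k\in\Z}z^{-k}w^k-\sum_{k\in\Z}z^{-k-1}w^{k+1}=0,
\]
the two series cancelling term by term after the shift $k\mapsto k+1$ in the second sum. This furnishes the anchor $(z-w)^{1}\partial_w^0\delta(z,w)=0$ of the induction.

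For the inductive step I assume the sharp identity at level $n-1$, namely $(z-w)^n\partial_w^{n-1}\delta(z,w)=0$, and apply $\partial_w$ to it. By the Leibniz rule, using $\partial_w(z-w)^n=-n(z-w)^{n-1}$,
\[
0=\partial_w\bigl((z-w)^n\partial_w^{n-1}\delta(z,w)\bigr)=-n(z-w)^{n-1}\partial_w^{n-1}\delta(z,w)+(z-w)^n\partial_w^n\delta(z,w),
\]
so that $(z-w)^n\partial_w^n\delta(z,w)=n(z-w)^{n-1}\partial_w^{n-1}\delta(z,w)$. Multiplying this relation by one further factor of $(z-w)$ and invoking the inductive hypothesis on the right-hand side gives
\[
(z-w)^{n+1}\partial_w^n\delta(z,w)=n(z-w)^n\partial_w^{n-1}\delta(z,w)=0,
\]
which is precisely the sharp identity at level $n$, closing the induction.

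The step I expect to be the crux is not any computation but the choice of the right statement to induct on: the raw inequality $m>n$ does not propagate cleanly, whereas the sharp form $(z-w)^{n+1}\partial_w^n\delta(z,w)=0$ does, and the passage from level $n-1$ to level $n$ goes through the differentiate-then-Leibniz identity above rather than through a direct assault on the doubly infinite series. The only subtlety to guard against is rearrangement: every manipulation must be read coefficientwise in the space of formal distributions, where the index shifts are legitimate because each coefficient of a fixed monomial $z^aw^b$ receives contributions from only finitely many terms.
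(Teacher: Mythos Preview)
Your proof is correct and cleanly organized: the reduction to the sharp case $m=n+1$, the base case via the telescoping cancellation in $(z-w)\delta(z,w)$, and the Leibniz-driven inductive step are all sound, and your remark about reading everything coefficientwise handles the only potential subtlety.

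As for comparison with the paper: there is nothing to compare. The paper states this lemma as a known fact about the Dirac distribution (introduced with ``We already have the following lemma'') and gives no proof of its own. Your argument is the standard one and would serve perfectly well as the omitted justification.
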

A formal distribution $a(z,w)$ is called {\bf local} if there exists an $N\in\Z_+$ such that $(z-w)^Na(z,w)=0.$ The following proposition describes an equivalent condition for a formal distribution to be local with the help of Dirac's $\delta$:

\begin{pro}\label{intr1}
A formal distribution $a(z,w)$ is local if and only if it is an expansion of $\partial^j_w\delta(z,w),\,j\in\Z$. That is, $a(z,w)$ can be written into
\begin{equation*}
a(z,w)=\sum\limits_{j\in\Z^+}c^j(w)\frac{\partial^j_w \delta(z,w)}{j!},
\end{equation*}
where $c^j(w)\in g[[w,w^{-1}]]$.
\end{pro}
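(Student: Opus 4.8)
The plan is to prove the two implications separately. The ``if'' direction will be immediate from Lemma~\ref{lem0}; for the ``only if'' direction I would induct on the smallest $N$ with $(z-w)^Na(z,w)=0$, the base case $N=1$ being the real content.

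\emph{The easy direction.} Suppose $a(z,w)=\sum_{j}c^j(w)\frac{\partial_w^j\delta(z,w)}{j!}$ is a finite sum, say $c^j(w)=0$ for all $j\geq N$ (this is exactly the shape the converse will produce). By Lemma~\ref{lem0}, $(z-w)^N\partial_w^j\delta(z,w)=0$ whenever $j<N$, so $(z-w)^Na(z,w)=\sum_{j<N}c^j(w)(z-w)^N\frac{\partial_w^j\delta(z,w)}{j!}=0$ and $a(z,w)$ is local.

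\emph{The converse.} Assume $(z-w)^Na(z,w)=0$ and induct on $N$. For $N=1$, write $a(z,w)=\sum_{m,n\in\Z}a_{m,n}z^{-m-1}w^{-n-1}$; the relation $z\,a(z,w)=w\,a(z,w)$ forces $a_{m+1,n}=a_{m,n+1}$ for all $m,n$, so $a_{m,n}$ depends only on $m+n$, and comparing coefficients shows $a(z,w)=c^0(w)\,\delta(z,w)$ where $c^0(w)=\mathrm{Res}_z\,a(z,w):=\sum_{n\in\Z}a_{0,n}w^{-n-1}\in g[[w,w^{-1}]]$ (both sides have $z^{-m-1}w^{-n-1}$-coefficient $a_{0,m+n}$). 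For the step, let $N\geq2$ and set $b(z,w):=(z-w)^{N-1}a(z,w)$, so $(z-w)b(z,w)=0$; by the base case $b(z,w)=c^{N-1}(w)\,\delta(z,w)$ with $c^{N-1}(w):=\mathrm{Res}_z\big((z-w)^{N-1}a(z,w)\big)$. Iterating the identity $(z-w)\frac{\partial_w^k\delta(z,w)}{k!}=\frac{\partial_w^{k-1}\delta(z,w)}{(k-1)!}$ (a one-line computation from Pascal's rule) gives $(z-w)^{N-1}\frac{\partial_w^{N-1}\delta(z,w)}{(N-1)!}=\delta(z,w)$, hence the formal distribution
\[
a'(z,w):=a(z,w)-c^{N-1}(w)\,\frac{\partial_w^{N-1}\delta(z,w)}{(N-1)!}
\]
satisfies $(z-w)^{N-1}a'(z,w)=c^{N-1}(w)\,\delta(z,w)-c^{N-1}(w)\,\delta(z,w)=0$. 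By the induction hypothesis $a'(z,w)=\sum_{j=0}^{N-2}c^j(w)\frac{\partial_w^j\delta(z,w)}{j!}$ with $c^j(w)\in g[[w,w^{-1}]]$, and adding back the removed term yields the finite expansion $a(z,w)=\sum_{j=0}^{N-1}c^j(w)\frac{\partial_w^j\delta(z,w)}{j!}$. Applying $\mathrm{Res}_z\big((z-w)^i\,\cdot\,\big)$ to both sides and using the orthogonality relation $\mathrm{Res}_z\big((z-w)^i\frac{\partial_w^j\delta(z,w)}{j!}\big)=\delta_{ij}$ (again a short computation from the properties of $\delta$) identifies $c^i(w)=\mathrm{Res}_z\big((z-w)^ia(z,w)\big)$ for every $i$, so $c^i(w)=0$ for $i\geq N$ and the sum in the statement is genuinely finite.

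I expect the main obstacle to be the $N=1$ case together with the formal-distribution bookkeeping: one must turn the heuristic ``$(z-w)a=0$ means $a$ is supported on the diagonal'' into the precise statement $a(z,w)=(\mathrm{Res}_z a)(w)\,\delta(z,w)$, and verify that every manipulation stays inside $g[[z,z^{-1},w,w^{-1}]]$ — in particular that the resulting expansion is finite, which is why recording $c^j(w)=0$ for $j\geq N$ is worthwhile. The remaining ingredients, the two $\delta$-function identities, each reduce to Pascal's rule (or directly to Lemma~\ref{lem0}) and are routine.
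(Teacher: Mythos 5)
Your proof is correct. The paper itself states Proposition~\ref{intr1} without proof, as a recalled standard fact (it is the decomposition theorem for local formal distributions from Kac's \emph{Vertex Algebras for Beginners}), and your argument — the base case $(z-w)a=0\Rightarrow a=(\mathrm{Res}_z a)(w)\,\delta(z,w)$ by comparing coefficients, followed by induction on $N$ using $(z-w)\,\partial_w^{j}\delta(z,w)/j!=\partial_w^{j-1}\delta(z,w)/(j-1)!$ and the orthogonality relation $\mathrm{Res}_z\bigl((z-w)^i\partial_w^j\delta(z,w)/j!\bigr)=\delta_{ij}$ — is exactly the standard one. Your observation that the sum must be finite (with $c^j(w)=\mathrm{Res}_z\bigl((z-w)^j a(z,w)\bigr)$ vanishing for $j\geq N$) is a worthwhile precision that the paper's statement leaves implicit.
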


\begin{defn}\label{defi-j}
Let $a(w)$ and $b(w)$ be formal distributions. Their {\bf $j$-product} and {\bf $\lambda$-product} are defined by
\begin{equation*}
a(w)_{(j)}b(w)= Res_z (z-w)^j [a(z),b(w)]
\end{equation*}
and
\begin{equation*}
[a(w)_\lambda b(w)]=\Phi^\lambda_{z,w}[a(z),b(w)],
\end{equation*}
where $\Phi^\lambda_{z,w}=Res_ze^{\lambda(z-w)}$.
\end{defn}

For local formal distributions, we have
\begin{pro}\label{prop3} If $a(z,w)$ is local, then 
\begin{equation*}
\Phi^\lambda_{z,w}\partial_z=-\lambda\Phi^\lambda_{z,w},
\end{equation*}
\begin{equation*}
\Phi^\lambda_{z,w}a(z,w)=\Phi^{-\lambda-\partial_w}_{z,w}a(z,w).
\end{equation*}
\end{pro}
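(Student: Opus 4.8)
The plan is to prove both identities by direct computation from the definition $\Phi^\lambda_{z,w}=Res_z\,e^{\lambda(z-w)}$, using two elementary facts: the residue of a total $z$-derivative vanishes, and every local distribution decomposes through $\delta(z,w)$ by Proposition \ref{intr1}. Throughout, locality of $a(z,w)$ is what guarantees that $e^{\lambda(z-w)}a(z,w)$ is a well-defined formal distribution with coefficients polynomial in $\lambda$, so that all the residues below make sense.

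For the first identity, I would start from the observation that for any formal distribution $f(z,w)$ one has $Res_z\,\partial_z f(z,w)=0$, since the coefficient of $z^{-1}$ in $\partial_z f$ is always zero. Applying this to $f=e^{\lambda(z-w)}a(z,w)$ and expanding by the Leibniz rule $\partial_z f=\lambda e^{\lambda(z-w)}a(z,w)+e^{\lambda(z-w)}\partial_z a(z,w)$, I would take $Res_z$ of both sides and rearrange to obtain $Res_z\,e^{\lambda(z-w)}\partial_z a=-\lambda\,Res_z\,e^{\lambda(z-w)}a$, which is exactly $\Phi^\lambda_{z,w}\partial_z a=-\lambda\,\Phi^\lambda_{z,w}a$. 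Running the same argument with $\partial_w$ in place of $\partial_z$, and using that $\partial_w$ commutes with $Res_z$, yields the companion relation $\Phi^\lambda_{z,w}\partial_w a=(\partial_w+\lambda)\Phi^\lambda_{z,w}a$, which will be the main tool for the second identity.

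For the second identity, the plan is to reduce to a single base computation via Proposition \ref{intr1}. Writing $a(z,w)=\sum_j c^j(w)\,\partial_w^j\delta(z,w)/j!$ with each $c^j(w)$ free of $z$, both sides are $Res_z$ of expressions in which $c^j(w)$ factors out, so it suffices to treat $a=\partial_w^j\delta(z,w)$. On the left I would use $\partial_z\delta=-\partial_w\delta$ together with the first identity $j$ times to get $\Phi^\lambda_{z,w}\partial_w^j\delta=(-1)^j\Phi^\lambda_{z,w}\partial_z^j\delta=\lambda^j\,\Phi^\lambda_{z,w}\delta$, and then the base case $\Phi^\lambda_{z,w}\delta(z,w)=1$ (an immediate check, since $\delta$ sets $z=w$ and $e^{\lambda(z-w)}|_{z=w}=1$). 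On the right I would expand the kernel of $\Phi^{-\lambda-\partial_w}_{z,w}$ and push each $\partial_w$ through the Fourier kernel using the derivative relation from the previous paragraph, so that the two sides meet on the same normal form $\sum_j c^j(w)\lambda^j/j!$.

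The main obstacle is the correct manipulation of the operator $\Phi^{-\lambda-\partial_w}_{z,w}$ in the second identity: because $\partial_w$ does not commute with multiplication by the factor $(z-w)$ hidden in $e^{\lambda(z-w)}$, one must fix an ordering convention for $e^{(-\lambda-\partial_w)(z-w)}$ and track it consistently, and the signs must be reconciled with the swap of the two arguments that is implicit when this relation is applied (for instance to $[a(z),b(w)]$ in the skew-symmetry of the $\lambda$-bracket). The derivative relations $\Phi^\lambda_{z,w}\partial_z a=-\lambda\Phi^\lambda_{z,w}a$ and $\Phi^\lambda_{z,w}\partial_w a=(\partial_w+\lambda)\Phi^\lambda_{z,w}a$ are precisely what let me move $\partial_w$ outside the residue in a controlled way, and the $\delta$-decomposition collapses everything to the single identity $\Phi^\lambda_{z,w}\delta=1$, so once the bookkeeping is set up the remaining verification is routine.
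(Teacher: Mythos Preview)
The paper does not prove this proposition: it is stated in the Preliminaries section as a known fact about the formal Fourier transform, with no argument given (the results are imported from Kac's lecture notes cited in the bibliography). So there is no in-paper proof to compare your attempt against.

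Your argument for the first identity is the standard one and is correct: integration by parts in the residue ($Res_z\,\partial_z f=0$) applied to $f=e^{\lambda(z-w)}a$ gives $\Phi^\lambda_{z,w}\partial_z=-\lambda\Phi^\lambda_{z,w}$ at once, and the companion formula $\Phi^\lambda_{z,w}\partial_w=(\partial_w+\lambda)\Phi^\lambda_{z,w}$ follows the same way.

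For the second identity your strategy (reduce via Proposition~\ref{intr1} to $a=\partial_w^{\,j}\delta$ and compute both sides) is again the standard route, and your evaluation of the left side as $\sum_j c^j(w)\lambda^j/j!$ is correct. However, your claim that the right side lands on the same expression is exactly the point you yourself label ``the main obstacle'', and you do not actually resolve it. A direct expansion with the naive ordering gives
\[
\Phi^{-\lambda-\partial_w}_{z,w}a(z,w)=\sum_{j}\frac{(-\lambda-\partial_w)^{\,j}}{j!}\,c^j(w),
\]
which is \emph{not} equal to $\sum_j c^j(w)\lambda^j/j!$ in general. The identity as printed only holds under the specific convention in which $\partial_w$ is moved outside to act on the result and the roles of $z,w$ are implicitly swapped on one side (this is how Kac states it, and is precisely what produces the skew-symmetry $[b_\lambda a]=-[a_{-\lambda-\partial}b]$). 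So rather than saying ``once the bookkeeping is set up the remaining verification is routine'', you need to \emph{fix} that bookkeeping and carry it through; otherwise the second half of your proof is incomplete.
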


The $j$-products and $\lambda$-products are just the two sides of the same coin:
\begin{pro}\label{j-lambda}
The $j$-products and $\lambda$-products are related by the following identity
\begin{equation*}
[a_\lambda b]=\sum_{j\in\Z^+}\frac{\lambda^j}{j!}(a_{(j)}b).
\end{equation*}
\end{pro}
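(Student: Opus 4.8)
The plan is to unwind the definition of the $\lambda$-product from Definition \ref{defi-j} and compare it directly with the generating series of the $j$-products. By definition we have
\begin{equation*}
[a(w)_\lambda b(w)] = \Phi^\lambda_{z,w}[a(z),b(w)] = Res_z\, e^{\lambda(z-w)}[a(z),b(w)],
\end{equation*}
so the entire statement reduces to understanding how the operator $Res_z\, e^{\lambda(z-w)}$ acts on the formal distribution $[a(z),b(w)]$. There is no deep structure to exploit here beyond the definitions themselves; the identity is essentially a bookkeeping statement about the exponential generating function.

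The key step is to Taylor-expand the exponential in powers of $\lambda$, writing $e^{\lambda(z-w)} = \sum_{j\in\Z^+}\frac{\lambda^j}{j!}(z-w)^j$. Substituting this into the formula above and using that $Res_z$ (extraction of the coefficient of $z^{-1}$) is $\C[\lambda]$-linear, I would pull the residue through the summation to obtain
\begin{equation*}
[a(w)_\lambda b(w)] = \sum_{j\in\Z^+}\frac{\lambda^j}{j!}\,Res_z\,(z-w)^j[a(z),b(w)] = \sum_{j\in\Z^+}\frac{\lambda^j}{j!}\,\bigl(a(w)_{(j)}b(w)\bigr),
\end{equation*}
where the last equality is precisely the definition of the $j$-product. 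This is the whole content of the identity, so the computation is short.

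The one point that requires care, and which I expect to be the main obstacle, is justifying that interchanging $Res_z$ with the sum over $j$ is legitimate, i.e.\ that both sides really agree as formal power series in $\lambda$ with coefficients in $g[[w,w^{-1}]]$. The cleanest way is to argue coefficient by coefficient: the coefficient of $\lambda^j$ on the right is manifestly $\frac{1}{j!}(a_{(j)}b)$, while on the left, expanding $e^{\lambda(z-w)}$ term by term \emph{before} taking the residue gives exactly $\frac{1}{j!}Res_z(z-w)^j[a(z),b(w)]$, and since multiplication by the polynomial $(z-w)^j$ followed by $Res_z$ is well defined for each fixed $j$, these match. To make this fully rigorous I would invoke locality: if $(z-w)^N[a(z),b(w)]=0$, then by Lemma \ref{lem0} together with Proposition \ref{intr1} all $j$-products with $j\ge N$ vanish, so the sum is in fact finite and every formal-convergence issue disappears. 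This locality remark also confirms that $[a_\lambda b]$ is polynomial in $\lambda$, as a $\lambda$-bracket must be, and I would include it to close the argument cleanly.
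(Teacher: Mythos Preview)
Your argument is correct: expanding $e^{\lambda(z-w)}$ as $\sum_{j\ge 0}\frac{\lambda^j}{j!}(z-w)^j$, applying $Res_z$ term by term, and then invoking locality to guarantee the sum is finite is exactly the standard derivation of this identity. Note, however, that the paper does not supply a proof of this proposition at all; it is stated in the preliminaries as a known fact, so there is nothing to compare against beyond confirming that your reasoning is sound.
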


Now, we introduce some definitions about Lie conformal algebra.
\begin{defn}\label{conformal algebra}
A {\bf Lie conformal algebra} $R$ is a left $\C[\partial]$-module endowed with a $\lambda$-bracket $[a_{\lambda}b]$ which defines a linear map $R\otimes R\rightarrow R[\lambda]=\C[\lambda]\otimes R$, subject to the following three axioms$:$
\begin{eqnarray}
&&[\partial a_\lambda b]=-\lambda[a_\lambda b],\ \ \  [a_\lambda \partial b]=(\partial+\lambda)[a_\lambda b],\ \ { (conformal\  sesquilinearity)},\label{lambda1}\label{lambda2}\\
&&[b_\lambda a]=-[a_{-\partial-\lambda} b],\ \ \ \ \ \ \ \ \ \ \ \ \ \ \ \ \ \ \ \ \ \ \ \ \ \ \ \ \ \ \ {(skew\ symmetry)},\label{lambda3}\\
&&[[a_\lambda b]_{\lambda+\mu}c]=[a_\lambda [b_\mu c]]-[b_\mu[a_\lambda c]],\ \ \ \ \ \ \ \ \ \ \ \ \ \ {(Jacobi\ identity)}.\label{lambda4}
\end{eqnarray}
A Lie conformal algebra $R$ is {\bf finite} if it is finitely generated as a $\C[\pa]$-module.
\end{defn}

Suppose there is a local family $F$ of $g$-valued formal distributions whose coefficients can generate the whole $g$, then $F$ is called a formal distribution Lie algebra of $g$. We denote by $\bar{F}$ the minimal subspace of $g[[z,z^{-1}]]$, which is closed under the
$j$-products and contains $F$. Then $\bar{F}$ is called the minimal local family of $F$. Let $R$ be a subset of $g[[z,z^{-1}]]$, which is closed under the derivation $\partial$ over $z$. If $R$ contains $\bar{F}$, we shall call it a conformal family. Note that a conformal family $R$ has the structure of a $\C[\partial]$-module.

\begin{defn}\label{confor module}
A $R$-{\rm\bf module} $V$ is defined to be a $\C[\partial]$-module with a $\lambda$-action
\begin{center}
$a_\lambda v: R\times V\rightarrow V[[\lambda]]$
\end{center} such that for any $a,b\in R,v\in V$, we have

\begin{eqnarray}
\label{mod}
&&[a_\lambda b]_{\lambda+\mu}v=a_\lambda(b_\mu v)-b_\mu(a_\lambda v),\\
\label{mod1}
&&(\partial a)_\lambda v=-\lambda a_\lambda v,\\
&&\label{mod2}
a_\lambda(\partial v)=(\partial+\lambda)a_\lambda v.
\end{eqnarray}
We call a $R$-module $V$ {\bf conformal} if $a_\lambda v\in V[\lambda]$ for all $a\in R,v\in V;$ {\bf $\Z$-graded} if $V=\bigoplus_{\gamma\in\Z}V_\gamma$ as a $\C[\partial]$-module and $(R_\alpha)_\lambda V_\gamma\subset V_{\alpha+\gamma}$ for any $\alpha,\gamma\in\Z$. In addition$,$ if each $V_\gamma$ can be generated by one element $v_\gamma\in V_\gamma$ over $\C[\partial],$ we call $V$ an {\bf intermediate series} $R$-module. An intermediate series $R$-module $V$ is called {\bf free} if each $V_\gamma$ is freely generated by some $v_\gamma\in V_\gamma$ over $\C[\partial]$.
\end{defn}

\section{The Lie Conformal Algebra $B$}
In this section, we shall start from the Lie algebra $L$ to construct a Lie conformal algebra $B$ via formal distribution Lie algebras.

Let $L_\alpha(z)=\sum_{i\in\Z}L_{\alpha,i-1}z^{-i-1}\in B[[z,z^{-1}]]$, for any $\alpha\in\Z$. Then $F=\{L_\alpha(z)\,|\,\alpha\in\Z\}$ is a set of $L$-valued formal distributions.
We can use $F$ to express the Lie brackets of $L$ into formal distributions as:
\begin{pro}\label{prop0}
The commutation relation between $L_\alpha(z)$ and $L_\beta(w)$ is
\begin{equation*}\label{str1}
[L_\alpha(z),L_\beta(w)]=\alpha\partial_wL_{\alpha+\beta}(w)\delta(z,w)+(\alpha+\beta)L_{\alpha+\beta}(w)\partial_w\delta(z,w).
\end{equation*}
\end{pro}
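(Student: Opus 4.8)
\noindent\emph{Proof idea.}
The plan is to compute $[L_\alpha(z),L_\beta(w)]$ directly from the definition and then recognize the result as a combination of $\delta(z,w)$ and $\partial_w\delta(z,w)$ with coefficients in $L[[w,w^{-1}]]$. Expanding $L_\alpha(z)=\sum_{i}L_{\alpha,i-1}z^{-i-1}$ and $L_\beta(w)=\sum_{j}L_{\beta,j-1}w^{-j-1}$, using bilinearity of the bracket and the structure constants $(\ref{brackets})$ (with the index shift $[L_{\alpha,i-1},L_{\beta,j-1}]=(\beta i-\alpha j)L_{\alpha+\beta,\,i+j-2}$), one obtains
\[[L_\alpha(z),L_\beta(w)]=\sum_{i,j\in\Z}(\beta i-\alpha j)\,L_{\alpha+\beta,\,i+j-2}\,z^{-i-1}w^{-j-1}.\]
Each monomial here carries a single element of $L$ as coefficient, so there is no convergence issue, and the task reduces to splitting the scalar $\beta i-\alpha j$ into its two pieces and identifying each resulting double series.

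For the $\beta i$ piece I would set $m=i+j-2$ and factor $w^{-j-1}=w^{-m-2}\cdot w^{i-1}$, so that the $w^{-m-2}$ part reassembles $L_{\alpha+\beta}(w)=\sum_m L_{\alpha+\beta,m}w^{-m-2}$ while the remaining $\sum_i i\,z^{-i-1}w^{i-1}$ is exactly $\partial_w\delta(z,w)$; this shows $\sum_{i,j}i\,L_{\alpha+\beta,i+j-2}z^{-i-1}w^{-j-1}=L_{\alpha+\beta}(w)\,\partial_w\delta(z,w)$. For the $-\alpha j$ piece the same substitution identifies the double series with $-\partial_w\big(L_{\alpha+\beta}(w)\,\delta(z,w)\big)$; expanding this by the Leibniz rule as $-(\partial_wL_{\alpha+\beta}(w))\delta(z,w)-L_{\alpha+\beta}(w)\partial_w\delta(z,w)$ and collecting the two contributions yields precisely $\alpha\,\partial_wL_{\alpha+\beta}(w)\,\delta(z,w)+(\alpha+\beta)L_{\alpha+\beta}(w)\,\partial_w\delta(z,w)$.

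The only delicate point is the index bookkeeping — keeping track of how the single index $m=i+j-2$ distributes the power of $w$ between $L_{\alpha+\beta}(w)$ and $\delta(z,w)$, and correctly applying the Leibniz rule in the $-\alpha j$ piece, where $j$ occurs simultaneously in the exponent $w^{-j-1}$ and in the subscript of $L_{\alpha+\beta}$. An equivalent, slightly more conceptual route is to first verify locality (e.g. that $(z-w)^2$ annihilates the double series, in the spirit of Lemma $\ref{lem0}$) and then compute the $j$-products $L_\alpha(w)_{(j)}L_\beta(w)=Res_z(z-w)^j[L_\alpha(z),L_\beta(w)]$ directly, obtaining $L_\alpha(w)_{(0)}L_\beta(w)=\alpha\,\partial_wL_{\alpha+\beta}(w)$, $L_\alpha(w)_{(1)}L_\beta(w)=(\alpha+\beta)L_{\alpha+\beta}(w)$ and $L_\alpha(w)_{(j)}L_\beta(w)=0$ for $j\ge 2$, so that Proposition $\ref{intr1}$ reproduces the stated formula; I would present the direct computation as the main argument and record this as the structural reason the answer has the asserted shape.
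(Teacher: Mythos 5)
Your proposal is correct and follows essentially the same route as the paper: expand both distributions, apply the structure constants $(\ref{brackets})$, split $\beta i-\alpha j$ into two double series, reindex on $i+j$ to factor out $L_{\alpha+\beta}(w)$ against $\delta(z,w)$ and $\partial_w\delta(z,w)$, and finish with the Leibniz rule on $\partial_w\bigl(L_{\alpha+\beta}(w)\delta(z,w)\bigr)$; the index bookkeeping you describe (your $m=i+j-2$ versus the paper's $k=i+j-1$) is equivalent and the signs check out. The alternative route via locality and $j$-products that you mention is also sound, but since you present the direct computation as the main argument, there is no substantive difference from the paper's proof.
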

\begin{proof}
Using equation (\ref{brackets}), we obtain
\begin{align*}
[L_\alpha(z),L_\beta(w)]&=[\sum_{i\in\Z}L_{\alpha,i-1}z^{-i-1},\sum_{j\in\Z}L_{\beta,j-1}w^{-j-1}]\\
                        &=\sum_{i,j\in\Z}[L_{\alpha,i-1}L_{\beta,j-1}]z^{-i-1}w^{-j-1}\\
                        &=\sum_{i,j\in\Z}(\beta i-\alpha j)L_{\alpha+\beta,i+j-2}z^{-i-1}w^{-j-1}\\
                        &=-\alpha\sum_{i,j\in\Z}jL_{\alpha+\beta,i+j-2}z^{-i-1}w^{-j-1}+\beta\sum_{i,j\in\Z}iL_{\alpha+\beta,i+j-2}z^{-i-1}w^{-j-1}\\
                        &=\alpha\partial_w\sum_{i,j\in\Z}L_{\alpha+\beta,i+j-2}z^{-i-1}w^{-j}+\beta\sum_{i,k\in\Z}iL_{\alpha+\beta,k-1}z^{-i-1}w^{-k+i-2}\\
                        &=\alpha\partial_w(\sum_{k\in\Z}L_{\alpha+\beta,k-1}w^{-k-1}\sum_{i\in\Z}z^{-i-1}w^i)\\
                        &\ \ \ \ +\beta\sum_{k\in\Z}L_{\alpha+\beta,k-1}w^{-k-1}\sum_{i\in\Z}iz^{-i-1}w^{i-1}\\
                        &=\alpha\partial_w(L_{\alpha+\beta}(w)\delta(z,w))+\beta L_{\alpha+\beta}(w)\partial_w\delta(z,w)\\
                        &=\alpha\partial_wL_{\alpha+\beta}(w)\delta(z,w)+(\alpha+\beta)L_{\alpha+\beta}(w)\partial_w\delta(z,w),
\end{align*}
where $k=i+j-1$.
\end{proof}

By Proposition \ref{intr1}, we know that $[L_\alpha(z),L_\beta(w)]$ is local for any $\alpha,\beta\in\Z$, which suggests $F$ is a local family of formal distributions. Since the coefficients of $F$ is a basis of $L$, we conclude that $F$ is a formal distribution Lie algebra of $L$.

\begin{pro}\label{prop1}
In terms of $\lambda$ brackets, we have
\begin{equation*}
[L_\alpha(w)_\lambda L_\beta(w)]=(\alpha\partial+(\alpha+\beta)\lambda)L_{\alpha+\beta}(w).
\end{equation*}
\end{pro}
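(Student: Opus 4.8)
The plan is to compute the $\lambda$-bracket directly from its definition, namely $[L_\alpha(w)_\lambda L_\beta(w)] = \Phi^\lambda_{z,w}[L_\alpha(z),L_\beta(w)]$, using the explicit formal-distribution commutation relation already established in Proposition \ref{prop0}. So the first step is to substitute
\[
[L_\alpha(z),L_\beta(w)] = \alpha\,\partial_w L_{\alpha+\beta}(w)\,\delta(z,w) + (\alpha+\beta)\,L_{\alpha+\beta}(w)\,\partial_w\delta(z,w)
\]
into $\Phi^\lambda_{z,w}(\cdot) = \mathrm{Res}_z\, e^{\lambda(z-w)}(\cdot)$. Since $\Phi^\lambda_{z,w}$ acts only on the variable $z$ (and the coefficient distributions $L_{\alpha+\beta}(w)$ and their $w$-derivatives are constants with respect to this operation), the computation reduces to evaluating $\Phi^\lambda_{z,w}\delta(z,w)$ and $\Phi^\lambda_{z,w}\partial_w\delta(z,w)$.

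The key computation is the elementary fact that $\mathrm{Res}_z\, e^{\lambda(z-w)}\partial_w^j\delta(z,w) = \lambda^j$ for $j\in\Z^+$. I would justify this either by expanding $e^{\lambda(z-w)} = \sum_k \frac{\lambda^k}{k!}(z-w)^k$ and using Lemma \ref{lem0} together with the standard residue identities for $\delta(z,w)$, or — more cleanly — by invoking Proposition \ref{prop3}: since $\delta(z,w)$ is local, $\Phi^\lambda_{z,w}\partial_z\delta = -\lambda\,\Phi^\lambda_{z,w}\delta$, and combined with $\partial_z\delta(z,w) = -\partial_w\delta(z,w)$ and the normalization $\Phi^\lambda_{z,w}\delta(z,w)=1$, one gets $\Phi^\lambda_{z,w}\partial_w^j\delta(z,w) = \lambda^j$ by induction on $j$. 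Feeding $j=0$ and $j=1$ back in gives
\[
[L_\alpha(w)_\lambda L_\beta(w)] = \alpha\,\partial_w L_{\alpha+\beta}(w)\cdot 1 + (\alpha+\beta)\,L_{\alpha+\beta}(w)\cdot\lambda = (\alpha\partial + (\alpha+\beta)\lambda)L_{\alpha+\beta}(w),
\]
identifying $\partial_w$ with $\partial$ on the conformal family, which is exactly the claimed formula.

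Alternatively — and this is worth mentioning as a cross-check — one can avoid $\Phi$ altogether and use Proposition \ref{j-lambda}: compute the $j$-products $L_\alpha(w)_{(j)}L_\beta(w) = \mathrm{Res}_z(z-w)^j[L_\alpha(z),L_\beta(w)]$ for each $j$, using Lemma \ref{lem0} to see that only $j=0$ and $j=1$ survive, then assemble $[a_\lambda b] = \sum_j \frac{\lambda^j}{j!}(a_{(j)}b)$. Concretely $L_\alpha(w)_{(0)}L_\beta(w) = \alpha\partial_w L_{\alpha+\beta}(w)$ and $L_\alpha(w)_{(1)}L_\beta(w) = (\alpha+\beta)L_{\alpha+\beta}(w)$, while all higher $j$-products vanish, which reproduces the same answer.

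I do not expect any genuine obstacle here: the proposition is essentially a bookkeeping translation of Proposition \ref{prop0} into $\lambda$-bracket language, and all the machinery (the residue action of $\Phi^\lambda_{z,w}$ on derivatives of $\delta$, the locality from Lemma \ref{lem0}, the $j$-product/$\lambda$-product dictionary) is already in place in Section 2. The only point requiring a modicum of care is keeping track of which variable each operator acts on — ensuring $\partial_w$ commutes past $\Phi^\lambda_{z,w}$ since the latter is a residue in $z$ — and correctly handling the normalization $\Phi^\lambda_{z,w}\delta(z,w)=1$; beyond that the proof is a two-line substitution.
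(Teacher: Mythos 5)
Your proposal is correct and its main line of argument --- substituting the commutation relation from Proposition \ref{prop0} into $\Phi^\lambda_{z,w}$, expanding $e^{\lambda(z-w)}$, and using Lemma \ref{lem0} to see that only the $j=0$ and $j=1$ residues $\mathrm{Res}_z\,\delta(z,w)=1$ and $\mathrm{Res}_z\,(z-w)\partial_w\delta(z,w)=1$ survive --- is exactly the paper's proof. The alternative justifications you sketch (induction via Proposition \ref{prop3}, or assembling the $j$-products via Proposition \ref{j-lambda}) are valid but not needed; the paper proceeds by the direct expansion.
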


\begin{proof}
By Definition \ref{defi-j} and Proposition \ref{prop0}, we have
\begin{align*}
[L_\alpha(w)_\lambda L_\beta(w)]&=Res_ze^{\lambda(z-w)}[L_\alpha(z)L_\beta(w)]\\
                                &=Res_ze^{\lambda(z-w)}(\alpha\partial_wL_{\alpha+\beta}(w)\delta(z,w)+(\alpha+\beta)L_{\alpha+\beta}(w)\partial_w\delta(z,w)).
\end{align*}
The first term can be calculated as
\begin{align*}
Res_ze^{\lambda(z-w)}\partial_wL_{\alpha+\beta}(w)\delta(z,w)&=Res_z\sum_{i=0}^\infty\frac{\lambda^i}{i!}(z-w)^i\partial_wL_{\alpha+\beta}(w)\delta(z,w)\\
                                                             &=\partial_wL_{\alpha+\beta}(w)\sum_{i=0}^\infty\frac{\lambda^i}{i!}Res_z(z-w)^i\delta(z,w),
\end{align*}
Since by Proposition \ref{lem0}, we know that $Res_z(z-w)^i\delta(z,w)=0$ for $i\geq 1$, we have
\begin{center}
    $Res_ze^{\lambda(z-w)}\partial_wL_{\alpha+\beta}(w)\delta(z,w)=\partial_wL_{\alpha+\beta}(w)Res_z\delta(z,w)=\partial_wL_{\alpha+\beta}(w)$.
\end{center}
Similarly, for the second term
\begin{align*}
Res_ze^{\lambda(z-w)}L_{\alpha+\beta}(w)\partial_w\delta(z,w)&=Res_z\sum_{i=0}^\infty\frac{\lambda^i}{i!}(z-w)^i L_{\alpha+\beta}(w)\partial_w\delta(z,w)\\
                                                             &=L_{\alpha+\beta}(w)\sum_{i=0}^\infty\frac{\lambda^i}{i!}Res_z (z-w)^i\partial_w\delta(z,w),
\end{align*}
by Proposition \ref{lem0} again, we have $Res_z(z-w)^i\partial_w\delta(z,w)=0$ for all $i\geq 2$, which means 
\begin{align*}
&Res_ze^{\lambda(z-w)}L_{\alpha+\beta}(w)\partial_w\delta(z,w)\\
&=L_{\alpha+\beta}(w)(Res_z\partial_w\delta(z,w)+\lambda Res_z(z-w)\partial_w\delta(z,w))\\
&=\lambda L_{\alpha+\beta}(w).
\end{align*}
Hence we obtain
\begin{equation*}
[L_\alpha(w)_\lambda L_\beta(w)]=\alpha\partial_w L_{\alpha+\beta}(w)+(\alpha+\beta)\lambda L_{\alpha+\beta}(w).
\end{equation*}
\end{proof}

The equality in Proposition \ref{prop1} can be rewritten as:
\begin{center}
$[L_\alpha(w)_\lambda L_\beta(w)]=\frac{\lambda^0}{0!}\alpha\partial L_{\alpha+\beta}(w) +\frac{\lambda^1}{1!}(\alpha+\beta)L_{\alpha+\beta}(w).$
\end{center}

By Proposition \ref{j-lambda}, this implies

\begin{pro}\label{prop2}
The $j$-products corresponding to the $\lambda$-products of $F$ are
$L_\alpha(w)_{(0)} L_\beta(w)=\alpha\partial L_{\alpha+\beta}(w)$, $L_\alpha(w)_{(1)} L_\beta(w)=(\alpha+\beta)L_{\alpha+\beta}(w)$ and $L_\alpha(w)_{(j)} L_\beta(w)=0$ for $j\neq 0,1$.
\end{pro}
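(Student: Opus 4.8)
The plan is to read the $j$-products off the $\lambda$-bracket already computed in Proposition~\ref{prop1}, using the dictionary between the two operations supplied by Proposition~\ref{j-lambda}. That proposition gives the expansion
\[
[L_\alpha(w)_\lambda L_\beta(w)]=\sum_{j\in\Z^+}\frac{\lambda^j}{j!}\bigl(L_\alpha(w)_{(j)}L_\beta(w)\bigr),
\]
so the $j$-products are exactly the normalized Taylor coefficients of the $\lambda$-bracket regarded as a polynomial in $\lambda$ with coefficients in the conformal family.

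First I would rewrite the conclusion of Proposition~\ref{prop1}, namely $[L_\alpha(w)_\lambda L_\beta(w)]=\alpha\partial L_{\alpha+\beta}(w)+(\alpha+\beta)\lambda L_{\alpha+\beta}(w)$, in the homogeneous form $\frac{\lambda^0}{0!}\,\alpha\partial L_{\alpha+\beta}(w)+\frac{\lambda^1}{1!}\,(\alpha+\beta)L_{\alpha+\beta}(w)$, as already noted in the paragraph preceding the statement. Matching this with the expansion above term by term in $\lambda$ then yields $L_\alpha(w)_{(0)}L_\beta(w)=\alpha\partial L_{\alpha+\beta}(w)$, $L_\alpha(w)_{(1)}L_\beta(w)=(\alpha+\beta)L_{\alpha+\beta}(w)$, and $L_\alpha(w)_{(j)}L_\beta(w)=0$ for all $j\geq 2$, while $j<0$ is vacuous since $j$ ranges over $\Z^+$. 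The only point needing a word of justification is that this comparison of coefficients is legitimate, i.e.\ that the expansion of the $\lambda$-bracket in powers of $\lambda$ is unique; this is immediate because the $\lambda$-bracket lands in $R[\lambda]=\C[\lambda]\otimes R$, a free $\C[\lambda]$-module. So there is no genuine obstacle here: the content is carried entirely by Propositions~\ref{prop1} and~\ref{j-lambda}.

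As an independent check one could instead compute $L_\alpha(w)_{(j)}L_\beta(w)=Res_z(z-w)^j[L_\alpha(z),L_\beta(w)]$ directly from Definition~\ref{defi-j}, substituting the commutator formula of Proposition~\ref{prop0} and applying Lemma~\ref{lem0}: the summand $(z-w)^j\partial_wL_{\alpha+\beta}(w)\delta(z,w)$ survives only for $j=0$ and $(z-w)^j(\alpha+\beta)L_{\alpha+\beta}(w)\partial_w\delta(z,w)$ only for $j\leq 1$, and taking $Res_z$ in each surviving case reproduces the same three formulas. Either route is short, and I would present the first.
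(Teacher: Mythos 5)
Your first route is exactly the paper's argument: the paper rewrites the bracket from Proposition~\ref{prop1} as $\frac{\lambda^0}{0!}\alpha\partial L_{\alpha+\beta}(w)+\frac{\lambda^1}{1!}(\alpha+\beta)L_{\alpha+\beta}(w)$ and reads off the $j$-products via Proposition~\ref{j-lambda}. Your added remarks on uniqueness of the $\lambda$-expansion and the alternative residue computation are correct but not needed.
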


Therefore, $\C[\partial]F=\bigoplus_{\alpha\in\Z}\C[\partial]L_\alpha$ is closed under $j$-products and we have $\bar{F}=\bigoplus_{\alpha\in\Z}\C[\partial]L_\alpha$. Since $\C[\partial]F$ has a natural $\C[\partial]$-module structure, we obtain the corresponding conformal family $B=\C[\partial]F=\bigoplus_{\alpha\in\Z}\C[\partial]L_\alpha$.

Now we claim $B$ is an infinite dimensional Lie conformal algebra. We shall check the conformal algebra structure on $B$ directly as follows:

\begin{proof}[Proof of Theorem \ref{firstthm}]
We need to check $(\ref{lambda1})-(\ref{lambda4})$ for the basis elements of $B$.

It is easy to see that (\ref{lambda1}) can be proved immediately from Proposition \ref{prop3}, and (\ref{lambda3}) can be checked as follows,
\begin{eqnarray*}
[L_\beta(w)_\lambda L_\alpha(w)]\!\!\!&=&\!\!\!(\beta\partial+(\alpha+\beta)\lambda)L_{\alpha+\beta}(w)\\
\!\!\!&=&\!\!\!-(\alpha\partial+(\alpha+\beta)(-\lambda-\partial))L_{\alpha+\beta}(w)\\
\!\!\!&=&\!\!\!-[L_\alpha(w)_{-\lambda-\partial}L_\beta(w)].
\end{eqnarray*}

Now we compute the terms of Jacobi identity (\ref{lambda4}) separately.
\begin{align}\label{equation1}
\begin{split}
    [L_\alpha(w)_\lambda[L_\beta(w)_\mu L_\gamma(w)]]&=[L_\alpha(w)_\lambda(\beta\partial+(\beta+\gamma)\mu)L_{\beta+\gamma}(w)]\\
                                                     &=\beta[L_\alpha(w)_\lambda\partial L_{\beta+\gamma}(w)]+\mu(\beta+\gamma)[L_\alpha(w)_\lambda L_{\beta+\gamma}(w)]\\
                                                     &=\beta(\partial+\lambda)(\alpha\partial+(\alpha+\beta+\gamma)\lambda)L_{\alpha+\beta+\gamma}(w)\\
                                                     &\quad+\mu(\beta+\gamma)(\alpha\partial+(\alpha+\beta+\gamma)\lambda)L_{\alpha+\beta+\gamma}(w)\\
                                                     &=(\beta\partial+\beta\lambda+\mu\beta+\mu\gamma)
                                                     (\alpha\partial+(\alpha+\beta+\gamma)\lambda)L_{\alpha+\beta+\gamma}(w).
\end{split}
\end{align}
Similarly, we have
\begin{align}\label{equation2}
\begin{split}
[[L_\alpha(w)_\lambda L_\beta(w)]_{\lambda+\mu}L_\gamma(w)]&=[(\alpha\partial+(\alpha+\beta)\lambda)L_{\alpha+\beta}(w)_{\lambda+\mu}L_\gamma(w)]\\
                                                           &=-(\lambda+\mu)\alpha((\alpha+\beta)\partial+(\alpha+\beta+\gamma)(\lambda+\mu))L_{\alpha+\beta+\gamma}(w)\\
                                                           &\quad +(\alpha+\beta)\lambda((\alpha+\beta)\partial+(\alpha+\beta+\gamma)(\lambda+\mu))L_{\alpha+\beta+\gamma}(w)\\
                                                           &=(\lambda\beta-\mu\alpha)((\alpha+\beta)\partial+(\alpha+\beta+\gamma)(\lambda+\mu))L_{\alpha+\beta+\gamma}(w),
\end{split}
\end{align}
and
\begin{equation}\label{equation3}
    [L_\beta(w)_\mu[L_\alpha(w)_\lambda L_\gamma(w)]]=(\alpha\partial+\alpha\mu+\alpha\lambda+\gamma\lambda)(\beta\partial+(\alpha+\beta+\gamma)\mu)L_{\alpha+\beta+\gamma}(w).
\end{equation}
Note that (\ref{equation3}) can be obtained by subtracting (\ref{equation2}) from (\ref{equation1}), that is,
\begin{equation*}
    [L_\beta(w)_\mu[L_\alpha(w)_\lambda L_\gamma(w)]]=[L_\alpha(w)_\lambda[L_\beta(w)_\mu L_\gamma(w)]-[[L_\alpha(w)_\lambda L_\beta(w)]_{\lambda+\mu}L_\gamma(w)],
\end{equation*}
which proves the Jacobi identity.
\end{proof}
Note that $B$ is a $\mathbb{Z}$-graded Lie conformal algebra in the sense $B=\bigoplus_{\alpha\in\Z}B_\alpha$, where $B_\alpha=\C[\partial]L_\alpha(w)$.

\section{The Representations of B}\setcounter{case}{0}

In this section, we will give a classification of all free intermediate series modules over $B$.

Let $V$ be an arbitrary free intermediate series $B$-module. Then as a $\C[\partial]$-module, $V=\bigoplus_{\gamma\in\Z} V_\gamma$, where each $V_\gamma$ is freely generated by some element $v_\gamma\in V_\gamma$. For any $\alpha,\gamma\in\Z$, denote $L_\alpha(w)_\lambda v_\gamma=f_{\alpha,\gamma}(\lambda,\partial)v_{\alpha+\gamma}$. We call $\{f_{\alpha,\gamma}(\lambda,\partial)\in \C[\lambda,\partial]\,|\,\alpha,\gamma\in\Z\}$ the structure coefficients of $V$ associated with the $\C[\partial]$-basis $\{v_\gamma\,|\,\gamma\in\Z\}$. We can see that $V$ is determined if and only if all of its structure coefficients are specified.

\begin{pro}
The structure coefficients $f_{\alpha,\gamma}(\lambda,\partial)$ of a free intermediate series $B$-module V must satisfy the following equality
\begin{equation}\label{use}
(\beta\lambda-\alpha\mu)f_{\alpha+\beta,\gamma}(\lambda+\mu,\partial)
=f_{\beta,\gamma}(\mu,\partial+\lambda)f_{\alpha,\beta+\gamma}(\lambda,\partial)-f_{\alpha,\gamma}(\lambda,\partial+\mu)f_{\beta,\alpha+\gamma}(\mu,\partial).
\end{equation}
\end{pro}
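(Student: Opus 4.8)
The plan is to obtain \eqref{use} by writing down the module axiom \eqref{mod} for the three basis elements $L_\alpha(w)$, $L_\beta(w)$ and the cyclic vector $v_\gamma$, and then translating every $\lambda$-action occurring there into the structure coefficients via the definition $L_\alpha(w)_\lambda v_\gamma = f_{\alpha,\gamma}(\lambda,\partial)v_{\alpha+\gamma}$. Concretely, take $a=L_\alpha(w)$, $b=L_\beta(w)$, $v=v_\gamma$ in
\[
[a_\lambda b]_{\lambda+\mu}v = a_\lambda(b_\mu v) - b_\mu(a_\lambda v).
\]
All three terms are to be expanded separately and compared on the common basis element $v_{\alpha+\beta+\gamma}$; since $V$ is free, equality of the coefficients in $\C[\lambda,\mu,\partial]$ follows.

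The left-hand side requires the bracket $[L_\alpha(w)_\lambda L_\beta(w)] = (\alpha\partial+(\alpha+\beta)\lambda)L_{\alpha+\beta}(w)$ from Theorem \ref{firstthm} (equivalently Proposition \ref{prop1}), after which one applies \eqref{mod1} to push the $\partial$ through: $(\partial L_{\alpha+\beta}(w))_{\lambda+\mu}v_\gamma = -(\lambda+\mu)L_{\alpha+\beta}(w)_{\lambda+\mu}v_\gamma = -(\lambda+\mu)f_{\alpha+\beta,\gamma}(\lambda+\mu,\partial)v_{\alpha+\beta+\gamma}$, so that
\[
[L_\alpha(w)_\lambda L_\beta(w)]_{\lambda+\mu}v_\gamma = \bigl(-(\lambda+\mu)\alpha + (\alpha+\beta)\lambda\bigr)f_{\alpha+\beta,\gamma}(\lambda+\mu,\partial)v_{\alpha+\beta+\gamma} = (\beta\lambda-\alpha\mu)f_{\alpha+\beta,\gamma}(\lambda+\mu,\partial)v_{\alpha+\beta+\gamma},
\]
which is exactly the left side of \eqref{use}. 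For the first term on the right, $L_\beta(w)_\mu v_\gamma = f_{\beta,\gamma}(\mu,\partial)v_{\beta+\gamma}$, and then acting by $L_\alpha(w)_\lambda$ one must move the already-present $\partial$ (coming from $f_{\beta,\gamma}(\mu,\partial)$, a polynomial in $\partial$ acting on $v_{\beta+\gamma}$) past the $\lambda$-action using \eqref{mod2}, which yields the substitution $\partial\mapsto\partial+\lambda$ inside $f_{\beta,\gamma}$; the result is $f_{\beta,\gamma}(\mu,\partial+\lambda)f_{\alpha,\beta+\gamma}(\lambda,\partial)v_{\alpha+\beta+\gamma}$. The second term is handled symmetrically (swap $\alpha\leftrightarrow\beta$, $\lambda\leftrightarrow\mu$) giving $f_{\alpha,\gamma}(\lambda,\partial+\mu)f_{\beta,\alpha+\gamma}(\mu,\partial)v_{\alpha+\beta+\gamma}$, and with the overall minus sign this is precisely the right side of \eqref{use}.

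The only genuinely delicate point — the step I'd expect to be the main obstacle for a careful write-up — is the correct bookkeeping of how $\partial$ commutes past the $\lambda$-action, i.e. the fact that axiom \eqref{mod2} forces the replacement $\partial\to\partial+\lambda$ when an outer $L_\alpha(w)_\lambda$ is applied to something of the form $g(\partial)v_{\delta}$. One has to verify this for an arbitrary polynomial $g$ (by linearity it suffices to check monomials $\partial^n$ and iterate \eqref{mod2}), and to be scrupulous about which variable ($\lambda$ or $\mu$) gets added in each of the two right-hand terms so that the asymmetry in \eqref{use} comes out correctly. Everything else is a direct substitution using Theorem \ref{firstthm} and the axioms \eqref{mod}, \eqref{mod1}, \eqref{mod2}, together with the freeness of $V$ to cancel the common basis vector $v_{\alpha+\beta+\gamma}$ and equate polynomial coefficients.
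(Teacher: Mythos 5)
Your proposal is correct and follows essentially the same route as the paper: both apply the module axiom \eqref{mod} to $L_\alpha(w)$, $L_\beta(w)$, $v_\gamma$, use \eqref{mod1} to convert $\alpha\partial+(\alpha+\beta)\lambda$ into the factor $\beta\lambda-\alpha\mu$ on the left, and use \eqref{mod2} to shift $\partial\mapsto\partial+\lambda$ (resp.\ $\partial\mapsto\partial+\mu$) in the inner structure coefficient on the right before cancelling $v_{\alpha+\beta+\gamma}$ by freeness. The bookkeeping of the $\partial$-shifts, which you flag as the delicate point, comes out exactly as in the paper's computation.
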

\begin{proof}
By Definition \ref{confor module}, we can write equation (\ref{mod}) with respect to the basis elements of $B$ and $V$ as
\begin{equation}\label{lala}
[L_\alpha(w)_\lambda L_\beta(w)]_{\lambda+\mu}v_\gamma=
L_\alpha(w)_\lambda(L_\beta(w)_\mu v_\gamma)-
L_\beta(w)_\mu(L_\alpha(w)_\lambda v_\gamma),
\end{equation}
for any $\alpha,\beta,\gamma\in\mathbb{Z}$.

The left side of the equation can be calculated as
\begin{align*}
&[L_\alpha(w)_\lambda L_\beta(w)]_{\lambda+\mu}v_\gamma\\
=&(((\alpha+\beta)\lambda+\alpha\partial)L_{\alpha+\beta}(w))_{\lambda+\mu}v_\gamma\\
=&((\alpha+\beta)\lambda-\alpha(\lambda+\mu))L_{\alpha+\beta}(w)_{\lambda+\mu}v_\gamma\\
=&(\beta\lambda-\alpha\mu)f_{\alpha+\beta,\gamma}(\lambda+\mu,\partial)v_{\alpha+\beta+\gamma}.
\end{align*}
Similarly
$$L_\alpha(w)_\lambda(L_\beta(w)_\mu v_\gamma)=f_{\beta,\gamma}(\mu,\partial+\lambda)f_{\alpha,\beta+\gamma}(\lambda,\partial)v_{\alpha+\beta+\gamma},$$
$$
L_\beta(w)_\mu(L_\alpha(w)_\lambda v_\gamma)=f_{\alpha,\gamma}(\lambda,\partial+\mu)f_{\beta,\alpha+\gamma}(\mu,\partial)v_{\alpha+\beta+\gamma}.$$
Hence follows the equality (\ref{use}).
\end{proof}

From now on, we shall focus on analyzing the structure coefficients of an intermediate series $B$-module $V$ using equation (\ref{use}). We will sometimes use the notation $f_{x,y}$ instead of $f_{x,y}(\lambda,\partial)$ for convenience.

\begin{lem}\label{pre4}
\begin{description}
  \item[(a)] If $f_{1,\gamma_0}=0$ for some $\gamma_0$, then $f_{\alpha,\gamma}=0$ for $\gamma\leq\gamma_0$ and $\alpha+\gamma\geq\gamma_0+1$.
  \item[(b)] If $f_{-1,\gamma_0}=0$ for some $\gamma_0$, then $f_{\alpha,\gamma}=0$ for $\gamma\geq\gamma_0$ and $\alpha+\gamma\leq\gamma_0+1$.
\end{description}
\end{lem}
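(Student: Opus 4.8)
The plan is to use the master identity (\ref{use}) with carefully chosen values of the indices $\alpha,\beta$. For part (a), the natural specialization is to take $\beta=1$ and keep $\gamma=\gamma_0$, so that the term $f_{1,\gamma_0}$ appears as a factor and kills one of the products on the right-hand side. Concretely, substituting $\beta=1$, $\gamma=\gamma_0$ into (\ref{use}) gives
\begin{equation*}
(\lambda-\alpha\mu)f_{\alpha+1,\gamma_0}(\lambda+\mu,\partial)
=f_{1,\gamma_0}(\mu,\partial+\lambda)f_{\alpha,\gamma_0+1}(\lambda,\partial)-f_{\alpha,\gamma_0}(\lambda,\partial+\mu)f_{1,\alpha+\gamma_0}(\mu,\partial).
\end{equation*}
When $f_{1,\gamma_0}=0$ the first product on the right vanishes, and we are left with a recursion relating $f_{\alpha+1,\gamma_0}$ to $f_{\alpha,\gamma_0}$. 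This shows that if $f_{\alpha,\gamma_0}=0$ then $f_{\alpha+1,\gamma_0}=0$ as well (as long as $\lambda-\alpha\mu\neq 0$ as a polynomial, which holds for all $\alpha$), so starting from $f_{1,\gamma_0}=0$ and inducting upward on $\alpha$ we get $f_{\alpha,\gamma_0}=0$ for all $\alpha\geq 1$. That handles the case $\gamma=\gamma_0$.

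To reach the full range $\gamma\leq\gamma_0$, $\alpha+\gamma\geq\gamma_0+1$, the idea is to propagate the vanishing downward in the first index. Here I would apply (\ref{use}) again, this time choosing the indices so that a known-zero coefficient of the form $f_{\bullet,\gamma_0}$ (with first index $\geq 1$) appears as a factor on the right, while the left-hand side carries the coefficient $f_{\alpha,\gamma}$ we want to show is zero. For instance, writing $\gamma = \gamma_0 - k$ with $k\geq 0$ and taking $\beta = k+1$ (so that $\beta+\gamma = \gamma_0$ appears as the second index in the factor $f_{\alpha,\beta+\gamma}$), identity (\ref{use}) becomes a relation in which $f_{\alpha,\gamma_0}$ and $f_{\alpha+k+1,\gamma_0}$ are both known to vanish once $\alpha\geq 1$; one then reads off that $(\beta\lambda-\alpha\mu)f_{\alpha+\beta,\gamma} = -f_{\alpha,\gamma}(\lambda,\partial+\mu)f_{\beta,\alpha+\gamma}(\mu,\partial)$, and an induction on $\alpha$ (or on $k$) forces all the relevant $f_{\alpha,\gamma}$ to be zero. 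Some bookkeeping is needed to confirm that the constraint $\alpha+\gamma\geq\gamma_0+1$ is exactly what guarantees the factor indices land in the already-established zero region, and one must check the polynomial $\beta\lambda-\alpha\mu$ is nonzero so it can be cancelled — this fails only when $\alpha=\beta=0$, which is outside the range of interest.

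Part (b) is entirely symmetric: one replaces $1$ by $-1$ throughout, uses $\beta=-1$ in the first step to obtain the downward recursion $f_{\alpha-1,\gamma_0}=0 \Rightarrow f_{\alpha,\gamma_0}=0$ giving $f_{\alpha,\gamma_0}=0$ for all $\alpha\leq -1$, and then propagates upward in $\gamma$ with the analogous choice of $\beta$. Alternatively, one could invoke the $\mathbb{Z}$-grading symmetry $L_\alpha\mapsto L_{-\alpha}$ of the bracket $[L_\alpha(w)_\lambda L_\beta(w)]=(\alpha\partial+(\alpha+\beta)\lambda)L_{\alpha+\beta}(w)$ — note it is odd in $(\alpha,\beta)$ in the right way — to transfer (a) to (b) directly, but spelling out the index shifts explicitly is probably cleaner.

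The main obstacle I anticipate is not any single computation but the combinatorial organization of the second step: choosing, for each target pair $(\alpha,\gamma)$ in the stated region, an instance of (\ref{use}) whose right-hand side involves only coefficients already shown to vanish, and then ordering the induction (which variable to induct on, in which direction) so that the argument is genuinely non-circular. Getting the quantifier ranges to match the geometric region $\{\gamma\leq\gamma_0,\ \alpha+\gamma\geq\gamma_0+1\}$ — a "quadrant" with corner at $\gamma_0$ — requires care, since both the first-index recursion and the second-index shift must be controlled simultaneously. Once the right substitution pattern is identified, each individual step is just cancelling a nonzero polynomial factor and applying the inductive hypothesis.
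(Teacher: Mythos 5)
Your first step is correct and coincides with the paper's: putting $\beta=1$, $\gamma=\gamma_0$ in (\ref{use}), the hypothesis $f_{1,\gamma_0}=0$ kills the first product on the right, and an upward induction on $\alpha$ gives $f_{\alpha,\gamma_0}=0$ for all $\alpha\ge 1$. The gap is in the second step, which is where the actual content of the lemma lives. Once the first product is removed, (\ref{use}) leaves $(\beta\lambda-\alpha\mu)f_{\alpha+\beta,\gamma}=-f_{\alpha,\gamma}(\lambda,\partial+\mu)f_{\beta,\alpha+\gamma}(\mu,\partial)$, and to conclude $f_{\alpha+\beta,\gamma}=0$ you must still show that one of the two \emph{surviving} factors vanishes; your sketch never identifies which one or why. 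Your concrete substitution is also off: with $\gamma=\gamma_0-k$ and $\beta=k+1$ you get $\beta+\gamma=\gamma_0+1$, not $\gamma_0$, so the factor $f_{\alpha,\beta+\gamma}$ is not one you control, and the quantity $f_{\alpha+k+1,\gamma_0}$ you claim ``is known to vanish'' is not among the four factors $f_{\beta,\gamma}$, $f_{\alpha,\beta+\gamma}$, $f_{\alpha,\gamma}$, $f_{\beta,\alpha+\gamma}$ that actually appear. You candidly defer the ``combinatorial organization,'' but that organization \emph{is} the proof.

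The paper closes the gap by keeping $\beta=1$ throughout and running a double induction: downward on the row index $\gamma$ starting from $\gamma_0$, and within each row upward on $\alpha$. For the new row $\gamma=n-1$ the first product always dies because $f_{\alpha,\beta+\gamma}=f_{\alpha,n}$ lies in the previously treated row; the second product dies for the initial element (take $\alpha=\gamma_0-n+1$) because there $f_{\beta,\alpha+\gamma}=f_{1,\gamma_0}=0$ is the original hypothesis, and for each subsequent element because $f_{\alpha,\gamma}=f_{m,n-1}$ was established at the preceding step of the inner induction. Your large-jump variant can be repaired (take $\beta=k$ so that $\beta+\gamma=\gamma_0$; for $k+1\le m\le 2k$ the factor $f_{k,\gamma_0-(2k-m)}$ lies in an earlier row, while for $m\ge 2k+1$ the factor $f_{m-k,\gamma_0-k}$ lies earlier in the same row), but none of that bookkeeping is in your write-up, and it is exactly the part that makes the argument non-circular. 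The reduction of (b) to (a) by symmetry is unproblematic either way.
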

\begin{proof}
We prove (a) first.

Let's use induction on $\gamma$. If $\gamma=\gamma_0$, let $\beta=\alpha=1,\gamma=\gamma_0$ in equation \eqref{use}, we have $f_{2,\gamma_0}=0$, by induction on $\alpha$, the result follows.

Suppose the result holds for some $\gamma=n\leq\gamma_0$, that is, $f_{\alpha,n}=0$ for all $\alpha\geq\gamma_0-\gamma+1=\gamma_0-n+1$. If $\gamma=n-1$, we need to prove $f_{\alpha,n-1}=0$ for all $\alpha\geq\gamma_0-n+2$.

Let $\gamma=n-1,\alpha=\gamma_0-n+1,\beta=1$ in equation \eqref{use}, we have $f_{\gamma_0-n+2,n-1}=0$. Suppose $f_{m,n-1}=0$ for some $m\geq\gamma_0-n+2$. Since $f_{m,n}=0$ by the supposition on $\gamma$, using equation \eqref{use}, we have $f_{m+1,n-1}=0$. Thus by induction on $\alpha$, we obtain the lemma.

By symmetry, (b) can be proved similarly using the steps of (a).
\end{proof}

\begin{lem}\label{strive1}
\begin{description}
\item[(a)] If $f_{1,\gamma_1}=f_{1,\gamma_2}= 0$ for some $\gamma_1<\gamma_2$, then $f_{1,\gamma}=0$ for $\gamma_1\leq\gamma\leq\gamma_2$.
\item[(b)] If $f_{-1,\gamma_1}=f_{-1,\gamma_2}=0$ for some $\gamma_1<\gamma_2$, then $f_{-1,\gamma}=0$ for $\gamma_1\leq\gamma\leq\gamma_2$.
\end{description}
\end{lem}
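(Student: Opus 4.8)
The plan is to derive both parts directly from the functional equation \eqref{use} together with Lemma \ref{pre4}, by choosing the integer parameters $\alpha,\beta$ in \eqref{use} cleverly and then pushing an auxiliary parameter to infinity. I will describe part (a) in detail; part (b) is entirely analogous, with the roles of large positive and large negative first indices interchanged.

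Fix $\gamma$ with $\gamma_1<\gamma<\gamma_2$, the endpoint cases $\gamma=\gamma_1$ and $\gamma=\gamma_2$ being the hypothesis itself. First I would pick an integer $n$ large enough that $n\ge\gamma_2-\gamma$ and $n\ge\gamma-\gamma_1$, and then set $\alpha=n+1$, $\beta=-n$ in \eqref{use}, so that $\alpha+\beta=1$ and the left-hand side becomes $(-n\lambda-(n+1)\mu)\,f_{1,\gamma}(\lambda+\mu,\partial)$. The key point is that the two structure coefficients on the right-hand side of \eqref{use} carrying the first index $n+1$, namely $f_{\alpha,\gamma}=f_{n+1,\gamma}$ and $f_{\alpha,\beta+\gamma}=f_{n+1,\gamma-n}$, both vanish for this choice of $n$: $f_{n+1,\gamma}=0$ by Lemma \ref{pre4}(a) applied to the hypothesis $f_{1,\gamma_2}=0$ (here $\gamma\le\gamma_2$ and $(n+1)+\gamma\ge\gamma_2+1$), while $f_{n+1,\gamma-n}=0$ by Lemma \ref{pre4}(a) applied to $f_{1,\gamma_1}=0$ (here $\gamma-n\le\gamma_1$ and $(n+1)+(\gamma-n)=\gamma+1\ge\gamma_1+1$). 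Since each of the two summands on the right of \eqref{use} contains one of these two factors, the whole right-hand side vanishes.

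It then remains only to observe that $-n\lambda-(n+1)\mu$ is a nonzero polynomial in $\C[\lambda,\mu]$, so the resulting identity $(-n\lambda-(n+1)\mu)\,f_{1,\gamma}(\lambda+\mu,\partial)=0$ in $\C[\lambda,\mu,\partial]$ forces $f_{1,\gamma}(\lambda+\mu,\partial)=0$, i.e. $f_{1,\gamma}=0$. For part (b) I would instead take $\alpha=-(n+1)$, $\beta=n$ in \eqref{use}, so $\alpha+\beta=-1$; one checks via Lemma \ref{pre4}(b) that $f_{-(n+1),\gamma}=0$ follows from $f_{-1,\gamma_1}=0$ and that $f_{-(n+1),n+\gamma}=0$ follows from $f_{-1,\gamma_2}=0$ once $n$ is large, and then divides off the nonzero polynomial $n\lambda+(n+1)\mu$.

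The step I expect to require the most care is the index bookkeeping that guarantees \emph{both} summands on the right of \eqref{use} vanish simultaneously: one must pick the specialization with $\alpha+\beta=\pm1$ and $|\alpha|$ large precisely so that the two ``descendant'' coefficients land in the ranges covered by Lemma \ref{pre4}, and one genuinely needs the vanishing hypothesis at \emph{both} ends $\gamma_1$ and $\gamma_2$, each endpoint being responsible for exactly one of the two terms. The remaining checks (the shape of the specialized left-hand side and the non-vanishing of the coefficient polynomial in $\lambda$ and $\mu$) are routine.
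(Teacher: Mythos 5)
Your argument is correct and is essentially the paper's own proof: the paper also specializes \eqref{use} to $\alpha+\beta=1$ with $\alpha$ large positive (it takes the specific choice $\alpha=\gamma_2-\gamma_1+1$, $\beta=\gamma_1-\gamma_2$, which satisfies your conditions on $n$), kills both right-hand terms via Lemma \ref{pre4} applied at the two endpoints, and divides off the nonzero polynomial $\beta\lambda-\alpha\mu$. Your treatment of part (b) matches the paper's appeal to symmetry, just carried out explicitly.
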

\begin{proof}
We need only to prove (a).
Let $\alpha=\gamma_2-\gamma_1+1$ and $\beta=\gamma_1-\gamma_2$. Since $\alpha+\gamma=\gamma_2+\gamma-\gamma_1+1\geq\gamma_2+1$ and $\gamma\leq\gamma_2$, by Lemma \ref{pre4} (a), $f_{1,\gamma_2}= 0$ gives us $f_{\alpha,\gamma}=0$. Similarly since $\alpha+\beta+\gamma=\gamma+1\geq\gamma_1+1$ and $\beta+\gamma=\gamma_1+\gamma-\gamma_2\leq\gamma_1$, $f_{1,\gamma_1}=0$ implies $f_{\alpha,\beta+\gamma}=0$. Consequently from equation \eqref{use}, we know that $f_{1,\gamma}=f_{\alpha+\beta,\gamma}=0$.
\end{proof}

Roughly speaking, the structure of an intermediate series $B$-module $V=\bigoplus_{\gamma\in\Z}V_\gamma=\bigoplus_{\gamma\in\Z}\C[\partial]v_\gamma$ must belong to one of the following two cases:

\begin{case}
The truncated-submodule case
\end{case}

If there is a $\gamma_0\in\Z$, such that $f_{1,\gamma_0}=f_{-1,\gamma_0+1}= 0$, we call $V$ an intermediate $B$ module with truncated submodules.

By Lemma \ref{strive1}, there must exist some $\gamma_1,\gamma_2\in\Z$, such that $f_{1,\gamma_1-1}\neq 0$, $f_{1,\gamma_2+1}\neq 0$ and $f_{1,\gamma}=0$ for $\gamma_1\leq\gamma\leq\gamma_2$. Similarly, there are $\gamma_3,\gamma_4\in\Z$, such that $f_{1,\gamma_3-1}\neq 0$, $f_{1,\gamma_4+1}\neq 0$ and $f_{-1,\gamma}= 0$ for $\gamma_3\leq\gamma\leq\gamma_4$. Let $p=\max\{\gamma_1,\gamma_3-1\},q=\min\{\gamma_2+1,\gamma_4\}$. Then by Lemma \ref{pre4}, $W_1=\bigoplus_{\gamma\leq p}V_\gamma$ and $W_2=\bigoplus_{\gamma\geq q}V_\gamma$ are both $B$-submodules of $V$. We call $W_1,W_2$ truncated intermediate series $B$-modules. Hence as a $B$-module, $V$ is the direct some of truncated submodules $W_1$, $W_2$ and trivial submodules $V_\gamma$ with $p<\gamma<q$. Note that $p,q$ are allowed to approach $\infty$.

\begin{case}
The complete $Z$-graded case
\end{case}
If for any $\gamma_0\in\Z$, either $f_{1,\gamma_0}$ or $f_{-1,\gamma_0+1}$ is a nonzero polynomial, we call $V$ a complete $\Z$-graded module. Note that this class of $B$-modules are indecomposable.

It will be proved later that the first class of $B$-modules must be trivial. Therefore we shall assume the modules discussed all belong to the second class if not declared.
\begin{lem}\label{lemm-F}
If for any $\gamma_0\in\Z$, not both $f_{1,\gamma_0}$ and $f_{-1,\gamma_0+1}$ are zero polynomials in \setcounter{case}{0} (\ref{use}). Then there must exist a constant $C\in\C$ and a set of polynomials $F_{\alpha,\gamma}(s)\in\C[s]$, such that $f_{\alpha,\gamma}(\lambda,\partial)=F_{\alpha,\gamma}((\alpha+\gamma+C)\lambda+\alpha\partial)$ for any $\alpha,\gamma\in\Z$. In particular, $f_{0,\gamma}(\lambda,\partial)=(\gamma+C)\lambda$ for all $\gamma\in\Z$.
\end{lem}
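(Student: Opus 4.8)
The plan is to mine the structural identity \eqref{use} in two stages: first to determine the ``degree‑zero'' coefficients $f_{0,\gamma}(\lambda,\partial)$ completely — this is where the constant $C$ is born — and then to transport that information to all $f_{\alpha,\gamma}$ by a short first‑order differential argument. The crux is the first stage.

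For the first stage I would begin by applying \eqref{mod} with $a=b=L_0(w)$; since $[L_0(w)_\lambda L_0(w)]=0$ by Proposition~\ref{prop1}, this forces $f_{0,\gamma}(\mu,\partial+\lambda)f_{0,\gamma}(\lambda,\partial)=f_{0,\gamma}(\lambda,\partial+\mu)f_{0,\gamma}(\mu,\partial)$. Putting $\mu=0$ shows $f_{0,\gamma}(0,\partial)$ is a scalar; then factoring out the highest power of $\lambda$ and iterating (equivalently, differentiating in $\mu$ at $\mu=0$ and noting that the resulting ordinary differential equation with respect to $\partial$ has no nonconstant polynomial solution) forces $f_{0,\gamma}$ to be $\partial$‑free, say $f_{0,\gamma}(\lambda,\partial)=h_\gamma(\lambda)$. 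Next I would put $\alpha=0$, $\beta=\pm1$ in \eqref{use}, which links $h_{\gamma-1}$, $h_\gamma$ and $f_{\pm1,\gamma}$; comparing the coefficients of the top power of $\partial$ on the two sides shows that whenever one of $f_{1,\gamma-1},f_{-1,\gamma}$ is a nonzero polynomial, its leading $\partial$‑coefficient (a polynomial in $\lambda$) is forced to be a nonzero constant and $h_\gamma-h_{\gamma-1}=\lambda$. By hypothesis at least one of $f_{1,\gamma-1},f_{-1,\gamma}$ is nonzero for every $\gamma$, so $h_\gamma-h_{\gamma-1}=\lambda$ holds for all $\gamma$ and hence $h_\gamma=h_0+\gamma\lambda$. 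It remains to see $h_0$ is a scalar multiple of $\lambda$: if $f_{-1,m}=0$ for some $m$, Lemma~\ref{pre4}(b) also gives $f_{-1,m+1}=0$, and the $\alpha=1,\beta=-1$ instance of \eqref{use} degenerates to $(\lambda+\mu)h_m(\lambda+\mu)=0$, so $h_m=0$ and $h_\gamma=(\gamma-m)\lambda$; if instead every $f_{-1,\gamma}$ is nonzero, extracting one further coefficient from the $\alpha=0,\beta=-1$ identity, together with the $\alpha=1,\beta=-1$ identity, forces $h_0$ to be a multiple of $\lambda$ (ruling out along the way a sub‑case in which $V$ cannot exist). Either way $f_{0,\gamma}(\lambda,\partial)=(\gamma+C)\lambda$ for a universal constant $C$, which in particular gives the last assertion of the lemma.

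For the second stage I would substitute $f_{0,\gamma}(\lambda,\partial)=(\gamma+C)\lambda$ into \eqref{use} with $\beta=0$ to obtain
\[
-\alpha\mu\,f_{\alpha,\gamma}(\lambda+\mu,\partial)=(\gamma+C)\mu\,f_{\alpha,\gamma}(\lambda,\partial)-(\alpha+\gamma+C)\mu\,f_{\alpha,\gamma}(\lambda,\partial+\mu),
\]
then cancel $\mu$ (legitimate in the polynomial ring) and differentiate the resulting identity with respect to $\mu$ at $\mu=0$ to reach the first‑order linear equation
\[
\alpha\,\partial_\lambda f_{\alpha,\gamma}=(\alpha+\gamma+C)\,\partial_\partial f_{\alpha,\gamma}.
\]
For $\alpha\neq0$ the characteristic lines are $(\alpha+\gamma+C)\lambda+\alpha\partial=\mathrm{const}$, so after the corresponding invertible linear change of coordinates the polynomial $f_{\alpha,\gamma}$, being constant along those lines, is a polynomial in the single variable $s:=(\alpha+\gamma+C)\lambda+\alpha\partial$ alone, i.e.\ $f_{\alpha,\gamma}(\lambda,\partial)=F_{\alpha,\gamma}(s)$ for some $F_{\alpha,\gamma}\in\C[s]$; for $\alpha=0$ one simply takes $F_{0,\gamma}(s)=s$, since $f_{0,\gamma}(\lambda,\partial)=(\gamma+C)\lambda=F_{0,\gamma}((0+\gamma+C)\lambda+0\cdot\partial)$. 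This yields the lemma.

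The main obstacle is the last step of the first stage — showing $h_0$ has no constant term and no term of degree $\ge 2$. This is the only place where the ``complete $\Z$‑graded'' hypothesis and the vanishing Lemmas~\ref{pre4} and \ref{strive1} are genuinely used, and it requires splitting into the case that some $f_{-1,\gamma}$ vanishes and the case that none does, with a short but fiddly polynomial‑degree computation in each. By contrast, everything after $f_{0,\gamma}$ is known is routine.
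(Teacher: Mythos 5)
Your plan tracks the paper's proof closely in three of its four steps: the $\alpha=\beta=0$ substitution showing $f_{0,\gamma}$ is $\partial$-free, the leading-coefficient extraction giving $h_{\gamma+1}=h_\gamma+\lambda$ under the standing hypothesis (the paper takes the top $\lambda$-coefficient of the $\beta=0$ instance of (\ref{use}) rather than the top $\partial$-coefficient of the $\alpha=0$ instance, but the two are equivalent), and the method-of-characteristics solution of $\alpha\,\partial_\lambda f_{\alpha,\gamma}=(\alpha+\gamma+C)\,\partial_\partial f_{\alpha,\gamma}$, which is essentially verbatim the paper's second stage. So for most of the lemma your route is the paper's route.

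The one place you diverge is exactly the step you yourself flag as the main obstacle --- showing $h_0(\lambda)$ is a scalar multiple of $\lambda$ --- and there your text is a statement of intent rather than an argument: ``extracting one further coefficient \dots forces $h_0$ to be a multiple of $\lambda$ (ruling out along the way a sub-case in which $V$ cannot exist)'' proves nothing as written. This is the genuine gap. The route can be completed (taking the coefficient of $\partial^{m-1}$, with $m=\deg_\partial f_{-1,\gamma}$, in the $\alpha=0,\beta=-1$ identity yields $a_{m-1}(\lambda+\mu)-a_{m-1}(\mu)=-m\,a_m\,h_{\gamma-1}(\lambda)$, which for $m\ge1$ forces $h_{\gamma-1}(0)=0$ and $\deg h_{\gamma-1}\le 1$; the degenerate possibility that every $f_{-1,\gamma}$ is a nonzero constant then has to be excluded by a separate computation), but none of that is in your proposal. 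Note also that your Case 1 (some $f_{-1,m}=0$) sits awkwardly with Lemma \ref{pre4}: combined with the lemma it contradicts the hypothesis that not both $f_{1,\gamma_0}$ and $f_{-1,\gamma_0+1}$ vanish, so that case is vacuous rather than a genuine alternative. The paper handles the whole step with a cleaner device that avoids any case split: it picks a non-constant $f_{\alpha_0,\gamma_0}$ with $\alpha_0\neq 0$ (which exists, else $V$ is trivial), rewrites the $\beta=0$ identity as $\alpha_0\mu\,g_1=h_{\alpha_0+\gamma_0}(\mu)\,g_2$ with $g_1=f(\lambda+\mu,\partial)-f(\lambda,\partial)$ and $g_2=f(\lambda,\partial+\mu)-f(\lambda,\partial)$, observes that each $g_i$ is divisible by $\mu$ exactly once and by no other nonconstant element of $\C[\mu]$, and concludes by unique factorization that $h_{\alpha_0+\gamma_0}(\mu)$ is a scalar multiple of $\mu$; since $h_{\alpha_0+\gamma_0}(\mu)=h_0(\mu)+(\alpha_0+\gamma_0)\mu$, this gives $h_0(\mu)=C\mu$ in one stroke. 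You should either adopt that divisibility argument or write out your coefficient computation, including the degenerate sub-case, in full.
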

\begin{proof}
By setting $\alpha=\beta=0$ in equation (\ref{use}), we have
\begin{equation*}\label{use1}
f_{0,\gamma}(\mu,\partial+\lambda)f_{0,\gamma}(\lambda,\partial)=f_{0,\gamma}(\lambda,\partial+\mu)f_{0,\gamma}(\mu,\partial).
\end{equation*}
Comparing the the polynomial degrees of $\lambda$ on both sides, we get $\deg_y f_{0,\gamma}(x,y) + \deg_x f_{0,\gamma}(x,y)\\=\deg_x f_{0,\gamma}(x,y)$. Thus $\deg_y f_{0,\gamma}(x,y)=0$, which implies $f_{0,\gamma}(x,y)=h_\gamma(x)$ for some $h_\gamma(x)\in\C[x]$.

Let $\beta=0$ in equation (\ref{use}), we obtain
\begin{equation}\label{use2}
-\alpha\mu f_{\alpha,\gamma}(\lambda+\mu,\partial)=
h_\gamma(\mu)f_{\alpha,\gamma}(\lambda,\partial)-f_{\alpha,\gamma}(\lambda,\partial+\mu)h_{\alpha+\gamma}(\mu).
\end{equation}
Suppose $\deg_x f_{\alpha,\gamma}(x,y)=n$, then $\partial^n_\lambda f_{\alpha,\gamma}(\lambda,\partial)=\varphi_{\alpha,\gamma}(\partial)$ for some $\varphi_{\alpha,\gamma}(\partial)\in\C[\partial]$. Taking the $n$-th partial derivative with respect to $\lambda$ on both sides of equation (\ref{use2}), we obtain
\begin{equation*}\label{use3}
h_{\alpha+\gamma}(\mu)\varphi_{\alpha,\gamma}(\partial+\mu)=(h_\gamma(\mu)+\alpha\mu)\varphi_{\alpha,\gamma}(\partial).
\end{equation*}
Since $\varphi_{\alpha,\gamma}(\partial)$ dose not divide the left hand side of the equality, $\varphi_{\alpha,\gamma}(\partial)$ does not depend on $\partial$. By our supposition, either $\varphi_{1,\gamma}$ or $\varphi_{-1,\gamma+1}$ is not zero, which suggests $h_{\gamma+1}(\mu)=h_\gamma(\mu)+\mu$ for any $\gamma\in\Z$. Hence induction gives us $h_{\alpha+\gamma}(\mu)=h_\gamma(\mu)+\alpha\mu$ for all $\alpha,\gamma$. Now we have
\begin{equation*}
-\alpha\mu f_{\alpha,\gamma}(\lambda+\mu,\partial)=
(h_{\alpha+\gamma}(\mu)-\alpha\mu)f_{\alpha,\gamma}(\lambda,\partial)-f_{\alpha,\gamma}(\lambda,\partial+\mu)h_{\alpha+\gamma}(\mu),
\end{equation*}
that is
\begin{equation*}\label{use4}
\alpha\mu (f_{\alpha,\gamma}(\lambda+\mu,\partial)-f_{\alpha,\gamma}(\lambda,\partial))=
h_{\alpha+\gamma}(\mu)(f_{\alpha,\gamma}(\lambda,\partial+\mu)-f_{\alpha,\gamma}(\lambda,\partial)).
\end{equation*}

If $f_{\alpha,\gamma}(x,y)$ does not depend on $x$ and $y$, for any $\alpha,\gamma\in\Z,\alpha\neq 0$, equation  (\ref{use}) implies that $f_{\alpha,\gamma}$ is always a zero polynomial, which means $V$ is trivial. Thus there must be some $\alpha_0,\gamma_0\in\Z,\alpha_0\neq 0$, such that $f_{\alpha_0,\gamma_0}(x,y)$ does not depend on $x$ and $y$.

For this $f_{\alpha_0,\gamma_0}$, we have
\begin{equation*}
\alpha_0\mu (f_{\alpha_0,\gamma_0}(\lambda+\mu,\partial)-f_{\alpha_0,\gamma_0}(\lambda,\partial))=
h_{\alpha_0+\gamma_0}(\mu)(f_{\alpha_0,\gamma_0}(\lambda,\partial+\mu)-f_{\alpha_0,\gamma_0}(\lambda,\partial)).
\end{equation*}

We shall prove $h_{\alpha_0+\gamma_0}(\mu)=C_{\alpha_0+\gamma_0}\mu$ for some constant $C_{\alpha_0+\gamma_0}$.

Let $g_1(\lambda,\mu,\partial)=f_{\alpha_0,\gamma_0}(\lambda+\mu,\partial)-f_{\alpha_0,\gamma_0}(\lambda,\partial)$ and $g_2(\lambda,\mu,\partial)=f_{\alpha_0,\gamma_0}(\lambda,\partial+\mu)-f_{\alpha_0,\gamma_0}(\lambda,\partial)$. Then if $h_{\alpha_0+\gamma_0}(\mu)$ is not a zero polynomial, we can show $g_1$ and $g_2$ are not zero polynomials either. Suppose on the contrary $g_1=0$, then $g_2=0$. Thus $f_{\alpha_0,\gamma_0}(\lambda+\mu,\partial)=f_{\alpha_0,\gamma_0}(\lambda,\partial+\mu)=f_{\alpha_0,\gamma_0}(\lambda,\partial)$, which means $f_{\alpha_0,\gamma_0}$ is a constant polynomial and leads to a contradiction. Similarly, $g_1=0$ if $g_2=0$.

If $\mu=0$, we have $g_1=0$. Thus $\mu|g_1$. Since $\partial_\mu g_1|_{\mu=0}\neq 0$, $\mu^2\nmid g$. Since $g_1$ is not a zero polynomial for $\mu\neq 0$, the only non-constant polynomials in $\C[\mu]$ that divide $g_1$ are of the form $A\mu$ with $A\in\C$. Hence $g_1(\lambda,\mu,\partial)=\mu h_1(\lambda,\mu,\partial)$, where $h_1$ is not divisible by any polynomial in $\C[\mu]$. Similarly, $g_2=\mu h_2$ for some $h_2$ with the same property as $h_1$.

Now we have $\alpha_0\mu \cdot \mu h_1=
h_{\alpha_0+\gamma_0}(\mu)\mu h_2$, which implies $h_{\alpha_0+\gamma_0}(\mu)=C_{\alpha_0+\gamma_0}\mu$ for some constant $C_{\alpha_0+\gamma_0}\in\C$. Then $h_0(\mu)=h_{\alpha_0+\gamma_0}(\mu)-(\alpha_0+\gamma_0)\mu=(C_{\alpha_0+\gamma_0}-(\alpha_0+\gamma_0))\mu$.

From now on, we shall fix a constant $C\in\C$ such that $h_0(\mu)=C\mu$. Consequently $h_\alpha(\mu)=h_0(\mu)+\alpha\mu=(\alpha+C)\mu$ for any $\alpha\in\Z$. That is, $f_{0,\gamma}(x,y)=(\gamma+C)x$ for all $\gamma\in\Z$.

Now we have
\begin{equation}\label{use5}
\alpha \mu (f_{\alpha,\gamma}(\lambda+\mu,\partial)-f_{\alpha,\gamma}(\lambda,\partial))=
(\alpha+\gamma+C)\mu(f_{\alpha,\gamma}(\lambda,\partial+\mu)-f_{\alpha,\gamma}(\lambda,\partial)).
\end{equation}
If $\mu\neq 0$
\begin{equation*}
\alpha \cdot \frac{f_{\alpha,\gamma}(\lambda+\mu,\partial)-f_{\alpha,\gamma}(\lambda,\partial)}{\mu}=
(\alpha+\gamma+C)\frac{f_{\alpha,\gamma}(\lambda,\partial+\mu)-f_{\alpha,\gamma}(\lambda,\partial)}{\mu}.
\end{equation*}
Let $\mu$ approaches zero, we can obtain a set of partial differential equations (PDE) with respect to $f_{\alpha,\gamma}$ for any $\alpha,\gamma\in\Z$ in the following form

\begin{equation*}
\alpha \cdot \frac{\partial f_{\alpha,\gamma}(x,y)}{\partial x}=
(\alpha+\gamma+C)\frac{\partial f_{\alpha,\gamma}(x,y)}{\partial y},
\end{equation*}
where we should keep in mind that $f_{\alpha,\gamma}(x,y)$ is required to be a polynomial here.

We solve the above PDE under the initial condition $F(s)=f_{\alpha,\gamma}(0,s)$, where $F(s)$ is a polynomial in one indeterminate $s$.
Let $z=f_{\alpha,\gamma}(x,y)$, the ordinary differential equations of the integral curve are
\begin{equation*}
\frac{dx}{dt}=\alpha,\quad\frac{dy}{dt}=-(\alpha+\gamma+C),\quad\frac{dz}{dt}=0,
\end{equation*}
integration along the curve gives
\begin{equation*}
x=\alpha t+A,\quad y=-(\alpha+\gamma+C)t+B,\quad z=H,
\end{equation*}
where $A,B,H$ are constants to be determined.

The initial value of the PDE implies that the integral curve passes $(0,s,F(s))$, which means
\begin{equation*}
x=\alpha t,\quad y=-(\alpha+\gamma+C)t+s,\quad z=F(s).
\end{equation*}
We assume $\alpha\neq 0$, the first two equations will give us
\begin{equation*}
s=y+\frac{\alpha+\gamma+C}{\alpha}\cdot x.
\end{equation*}
Thus
\begin{equation*}
z=F(y+\frac{\alpha+\gamma+C}{\alpha}\cdot x),
\end{equation*}
which means the general solution for the PDE is
\begin{equation}\label{solution}
f_{\alpha,\gamma}(\lambda,\partial)=F_{\alpha,\gamma}((\alpha+\gamma+C)\lambda+\alpha\partial),
\end{equation}
for some polynomial $F_{\alpha,\gamma}(s)\in\C[s]$. Note if $\alpha=0$, $f_{0,\gamma}(\lambda,\partial)=(\gamma+C)\lambda$ can be fitted into equation (\ref{solution}). Thus equation (\ref{solution}) holds for any $\alpha,\gamma\in\Z$.
\end{proof}

A similar result can be set up for the first case of intermediate series $B$-modules:

\begin{lem}\label{pre6}
Let $V$ be an intermediate series module of the first class, that is, $V=W_1\bigoplus W_2\bigoplus_{p<\gamma<q}V_\gamma$ as a $B$-module. Then there must be some constants $C_1,C_2\in\C$ and a set of polynomials $F_{\alpha,\gamma}(s)\in\C[s]$, such that
\[f_{\alpha,\gamma}(\lambda,\partial)=\left\{\begin{array}{ll}
F_{\alpha,\gamma}((\alpha+\gamma+C_1)\lambda+\alpha\partial)&\mbox{if $\alpha+\gamma\leq p$ and $\gamma\leq p$},\\[4pt]
F_{\alpha,\gamma}((\alpha+\gamma+C_2)\lambda+\alpha\partial)&\mbox{if $\alpha+\gamma\geq q$ and $\gamma\geq q$},\\[4pt]
0& else where.
\end{array} \right.
\]
\end{lem}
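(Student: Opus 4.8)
The plan is to reduce the statement of Lemma~\ref{pre6} to the already-established Lemma~\ref{lemm-F} by restricting attention to each of the two truncated submodules separately. The key observation is that $W_1=\bigoplus_{\gamma\leq p}V_\gamma$ is itself a $B$-module, and by Lemma~\ref{pre4} its structure coefficients $f_{\alpha,\gamma}$ vanish precisely when $\gamma\leq p$ but $\alpha+\gamma\geq p+1$; similarly $W_2=\bigoplus_{\gamma\geq q}V_\gamma$ is a $B$-module with $f_{\alpha,\gamma}=0$ when $\gamma\geq q$ but $\alpha+\gamma\leq q-1$. Meanwhile all structure coefficients $f_{\alpha,\gamma}$ with $p<\gamma<q$, or with $\gamma\le p<\alpha+\gamma$, or with $\gamma\ge q>\alpha+\gamma$, are zero, which accounts for the ``elsewhere'' branch. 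So it suffices to show that on $W_1$ the coefficients have the form $F_{\alpha,\gamma}((\alpha+\gamma+C_1)\lambda+\alpha\partial)$ and symmetrically on $W_2$ with a possibly different constant $C_2$.

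First I would set up the argument on $W_1$. The relation~(\ref{use}) still holds for all indices $\alpha,\beta,\gamma$ with $\gamma,\alpha+\beta+\gamma\le p$ (and all intermediate partial sums $\le p$), since these are exactly the relations internal to $W_1$. The nontriviality hypothesis one needs to invoke Lemma~\ref{lemm-F} is that for each relevant $\gamma_0$, not both $f_{1,\gamma_0}$ and $f_{-1,\gamma_0+1}$ vanish; but inside $W_1$ this should follow from the choice of $p$ as the supremum of the ``non-vanishing'' region (the definitions of $\gamma_1,\gamma_3$ in Case~1 guarantee that $f_{1,\cdot}$ and $f_{-1,\cdot}$ do not both become identically zero in the interior of $W_1$). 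Once this is checked, repeating the computation of Lemma~\ref{lemm-F} verbatim but restricted to indices in the range of $W_1$ yields a constant $C_1$ and polynomials $F_{\alpha,\gamma}$ with $f_{\alpha,\gamma}(\lambda,\partial)=F_{\alpha,\gamma}((\alpha+\gamma+C_1)\lambda+\alpha\partial)$ whenever $\gamma\le p$ and $\alpha+\gamma\le p$. The same argument on $W_2$ gives $C_2$.

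The main obstacle I anticipate is the bookkeeping needed to verify that the nontriviality hypothesis of Lemma~\ref{lemm-F} genuinely transfers to each truncated piece, i.e.\ that $W_1$ and $W_2$ are not themselves ``degenerate'' in a way that breaks the argument (for instance, if $W_1$ were trivial, $C_1$ would be undetermined, but then one may simply absorb that case by choosing $C_1$ arbitrarily, or note $W_1$ contributes only trivial summands). One must also be careful that the proof of Lemma~\ref{lemm-F} used the PDE derived by letting $\mu\to 0$ in~(\ref{use5}), and that step requires having at least one index $\alpha_0\neq 0$ with $f_{\alpha_0,\gamma_0}$ a nonzero constant polynomial within the truncated range; establishing this within $W_1$ (respectively $W_2$) is the crux. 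A clean way to handle all of this uniformly is to observe that $W_1$, after the (harmless) reindexing $\gamma\mapsto\gamma$, is a complete $\Z_{\le p}$-graded module in the sense analogous to Case~2, and then run the proof of Lemma~\ref{lemm-F} on it without modification; the restriction of the index set does not affect any of the degree comparisons or the characteristic-curve computation, since those were all carried out pointwise in $(\alpha,\gamma)$. The final assembly — piecing $W_1$, the trivial middle summands, and $W_2$ back together — is then immediate from the direct-sum decomposition.
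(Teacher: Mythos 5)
Your proposal is correct and is exactly the argument the paper intends: the paper gives no proof of Lemma~\ref{pre6}, merely remarking that ``a similar result can be set up'' for the first case, and your plan of running the proof of Lemma~\ref{lemm-F} separately on the truncated submodules $W_1$ and $W_2$ (obtaining the two constants $C_1,C_2$), while using Lemma~\ref{pre4} and the direct-sum decomposition to force the remaining coefficients to vanish, is the correct fleshing-out of that remark. You also correctly identify and dispose of the only delicate points, namely that the non-vanishing hypothesis of Lemma~\ref{lemm-F} transfers to the interior of each truncated piece by the choice of $p$ and $q$, and that a trivial piece can be absorbed by choosing its constant arbitrarily.
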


Now equation (\ref{use}) can be written as
\begin{align}\label{use+}
\begin{split}
&(\beta\lambda-\alpha\mu)F_{\alpha+\beta,\gamma}((\alpha+\beta+\gamma+C)(\lambda+\mu)+(\alpha+\beta)\partial)\\
=&F_{\beta,\gamma}((\beta+\gamma+C)\mu+\beta(\partial+\lambda))F_{\alpha,\beta+\gamma}((\alpha+\beta+\gamma+C)\lambda+\alpha\partial)\\
&-F_{\alpha,\gamma}((\alpha+\gamma+C)\lambda+\alpha(\partial+\mu))F_{\beta,\alpha+\gamma}((\alpha+\beta+\gamma+C)\mu+\beta\partial).
\end{split}
\end{align}

For convenience, let's denote $\deg F_{\alpha,\gamma}(s)=n_{\alpha,\gamma}$.

We are finally able to get rid of the annoying zero structure coefficients:

\begin{lem}\label{nonzero}
If $F_{\alpha_0,\gamma_0}=0$ for some $\alpha_0,\gamma_0\in\Z$, where $\alpha_0\neq 0$, then $F_{\alpha,\gamma}= 0$ for all $\alpha,\gamma\in\Z$, that is, $V$ must be a trivial $B$-module.
\end{lem}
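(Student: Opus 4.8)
The plan is to exploit the structural results already obtained, principally Lemma~\ref{lemm-F} (together with its counterpart Lemma~\ref{pre6} for the truncated case), which tells us that every structure coefficient has the form $f_{\alpha,\gamma}(\lambda,\partial)=F_{\alpha,\gamma}((\alpha+\gamma+C)\lambda+\alpha\partial)$, and to combine it with the propagation Lemmas~\ref{pre4} and~\ref{strive1}. The idea is that a single vanishing $F_{\alpha_0,\gamma_0}$ with $\alpha_0\neq 0$ forces, via the $\lambda$-product identity~\eqref{use} (equivalently \eqref{use+}), a cascade of further vanishings until every $f_{\alpha,\gamma}$ is zero.

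First I would reduce to the case $\alpha_0=\pm 1$. If $\alpha_0\neq 0$ and $F_{\alpha_0,\gamma_0}=0$, i.e. $f_{\alpha_0,\gamma_0}=0$, then since $L_{\alpha_0}(w)=L_1(w)_{(0)}\cdots$ is built from $L_{\pm1}(w)$ inside $B$ (indeed $L_{\alpha_0,\gamma}$ is reached from $L_{\pm1}$ by iterated brackets, because $[L_1(w)_\lambda L_{k}(w)]=(\partial+(k+1)\lambda)L_{k+1}(w)$ is nonzero for all $k$), I would substitute suitable $\alpha,\beta$ with $\alpha+\beta=\alpha_0$ into \eqref{use} to show that $f_{1,\gamma}=0$ or $f_{-1,\gamma}=0$ for an appropriate $\gamma$. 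Concretely, writing $\alpha_0=\alpha+\beta$ with, say, $\alpha=\alpha_0-1,\ \beta=1$, the right-hand side of \eqref{use} at $\gamma=\gamma_0$ reads $f_{1,\gamma_0}(\mu,\partial+\lambda)f_{\alpha_0-1,\,1+\gamma_0}(\lambda,\partial)-f_{\alpha_0-1,\gamma_0}(\lambda,\partial+\mu)f_{1,\,\alpha_0-1+\gamma_0}(\mu,\partial)$, and the left-hand side vanishes; playing this identity together with the analogous one obtained by interchanging the roles, and using that the leading coefficients of the $f$'s cannot cancel generically (they are polynomials in distinct variables, as in the degree arguments of Lemma~\ref{lemm-F}), I would extract that some $f_{\pm1,\gamma_1}=0$.

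Next, once $f_{1,\gamma_1}=0$ for some $\gamma_1$ (the case $f_{-1,\gamma_1}=0$ is symmetric, by the symmetry already used to deduce Lemma~\ref{pre4}(b) from (a)), Lemma~\ref{pre4}(a) gives $f_{\alpha,\gamma}=0$ for all $\gamma\leq\gamma_1$ and $\alpha+\gamma\geq\gamma_1+1$, i.e. a whole ``quadrant'' of coefficients vanishes. The point is that in the complete $\Z$-graded case we are in (Case~2), for every $\gamma_0$ at least one of $f_{1,\gamma_0}$, $f_{-1,\gamma_0+1}$ is nonzero; so if $f_{1,\gamma}=0$ on an infinite set of $\gamma$'s we would be forced into Case~1, whereas Lemma~\ref{strive1}(a) shows that the vanishing set of $f_{1,\cdot}$ is an interval. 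I would argue that a nonempty vanishing interval for $f_{1,\cdot}$, combined with the quadrant-vanishing from Lemma~\ref{pre4} and a parallel application starting from an $f_{-1,\gamma_2}=0$ (which one produces by feeding the already-known zeros back into \eqref{use}, e.g. taking $\beta=-1$ and $\alpha$ large), forces the two quadrants to overlap and cover the entire index lattice $\{(\alpha,\gamma):\alpha\in\Z\}$. Hence $f_{\alpha,\gamma}=0$ for all $\alpha,\gamma$, and $V$ is trivial.

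The main obstacle is the very first reduction: starting from a single zero $F_{\alpha_0,\gamma_0}$ with $|\alpha_0|$ possibly large, one must manufacture a zero at $\alpha=\pm1$. This is where the interplay of \eqref{use} with the explicit form \eqref{solution} does the work — the substitution $\beta=0$ already shows $f_{0,\gamma}=(\gamma+C)\lambda\neq0$ unless $\gamma=-C$, so the zero cannot ``come from'' the $\alpha=0$ row; one genuinely has to split $\alpha_0$ and track leading terms. I expect the cleanest route is to induct downward on $|\alpha_0|$: given $F_{\alpha_0,\gamma_0}=0$ with $|\alpha_0|\geq2$, choose the splitting $(\alpha,\beta)=(\mathrm{sgn}(\alpha_0),\,\alpha_0-\mathrm{sgn}(\alpha_0))$ in \eqref{use+}, compare top-degree coefficients in $\lambda$ and $\mu$ on both sides, and conclude that either $F_{\mathrm{sgn}(\alpha_0),\,\gamma_0}=0$ or $F_{\alpha_0-\mathrm{sgn}(\alpha_0),\,\gamma_0}=0$ (or $F_{\alpha_0-\mathrm{sgn}(\alpha_0),\,\mathrm{sgn}(\alpha_0)+\gamma_0}=0$), in each case lowering $|\alpha_0|$ or landing directly at $\pm1$; then invoke the quadrant argument above. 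The degenerate sub-cases where several leading coefficients conspire to cancel (the ``$(\beta\lambda-\alpha\mu)$'' prefactor can also vanish identically only if $\alpha=\beta=0$, which we avoid) will need to be checked by hand but are routine.
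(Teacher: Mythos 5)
There is a genuine gap, and it sits exactly where you flag the ``main obstacle'': the reduction from a zero at $F_{\alpha_0,\gamma_0}$ with $|\alpha_0|\geq 2$ to a zero at index $\pm1$. Your plan is to split $\alpha_0=\alpha+\beta$ in \eqref{use+} so that the left-hand side vanishes, obtaining $0=F_{\beta,\gamma_0}(\cdot)F_{\alpha,\beta+\gamma_0}(\cdot)-F_{\alpha,\gamma_0}(\cdot)F_{\beta,\alpha+\gamma_0}(\cdot)$, and then to conclude that one of the four factors is the zero polynomial. But an identity $AB=CD$ in $\C[\lambda,\mu,\partial]$ does not force any factor to vanish, and your fallback (``leading coefficients cannot cancel generically'') breaks down precisely in the degenerate case you would need to handle, namely when the $F$'s are constants of degree $0$ --- there the equation is just a numerical relation $c_1c_2=c_3c_4$ and yields nothing. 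The paper's proof avoids the splitting altogether: it keeps $\alpha=\alpha_0$ and lets $\beta$ vary, so that the hypothesis $F_{\alpha_0,\gamma_0}=0$ wipes out one of the two products on the right of \eqref{use+} outright, leaving
$(\beta\lambda-\alpha_0\mu)F_{\alpha_0+\beta,\gamma_0}(\cdots)=F_{\beta,\gamma_0}(\cdots)F_{\alpha_0,\beta+\gamma_0}(\cdots)$.
The left side is divisible by $\beta\lambda-\alpha_0\mu$ and hence contains no monomial in $\partial$ alone, while the right side would contain one if either degree $n_{\beta,\gamma_0}$ or $n_{\alpha_0,\beta+\gamma_0}$ were positive; this forces both degrees to be $0$ and then $F_{\alpha_0+\beta,\gamma_0}=0$ for every $\beta$, i.e.\ the entire row $\gamma=\gamma_0$ dies at once. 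The reduction to $\pm1$ that you labour over is then automatic, since in particular $F_{1,\gamma_0}=F_{-1,\gamma_0}=0$.

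Your second stage also has a flaw: the two ``quadrants'' produced by Lemma \ref{pre4}(a) and (b) (namely $\{\gamma\leq\gamma_1,\ \alpha+\gamma\geq\gamma_1+1\}$ and $\{\gamma\geq\gamma_2,\ \alpha+\gamma\leq\gamma_2+1\}$) do \emph{not} cover the whole index lattice --- for instance pairs with $\gamma\leq\gamma_1$ and $\alpha+\gamma\leq\gamma_1$ lie in neither --- so ``the two quadrants overlap and cover everything'' is false as stated. What the paper actually does is weaker and suffices: Lemma \ref{pre4} only needs to supply, for \emph{each} row $\gamma$, some single $\beta_0\neq 0$ with $F_{\beta_0,\gamma}=0$; the row-killing argument above is then reapplied to that row to annihilate it entirely. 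If you repair your first step by adopting the fixed-$\alpha_0$, varying-$\beta$ computation, and replace the covering claim by ``one nonzero-index zero per row, then kill each row,'' your outline becomes the paper's proof.
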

\begin{proof}
If $F_{\alpha_0,\gamma_0}=0$ for some $\alpha_0,\gamma_0\in\Z$, where $\alpha_0\neq 0$, let $\alpha=\alpha_0,\gamma=\gamma_0$ in equation (\ref{use+}). We have
\begin{align*}
\begin{split}
&(\beta\lambda-\alpha_0\mu)F_{\alpha_0+\beta,\gamma_0}((\alpha_0+\beta+\gamma_0+C)(\lambda+\mu)+(\alpha_0+\beta)\partial)\\
=&F_{\beta,\gamma_0}((\beta+\gamma_0+C)\mu+\beta(\partial+\lambda))F_{\alpha_0,\beta+\gamma_0}((\alpha_0+\beta+\gamma_0+C)\lambda+\alpha_0\partial).
\end{split}
\end{align*}
If $n_{\beta,\gamma_0}\geq 1$ or $n_{\alpha_0,\beta+\gamma_0}\geq 1$, the expansion on the right hand side of the equality will contain a term only have one variable $\partial$, but the left hand side does not. Thus $n_{\beta,\gamma_0}=n_{\alpha_0,\beta+\gamma_0}=0$, which implies $F_{\alpha_0+\beta,\gamma_0}= 0$ for any $\beta\in\Z$. Therefore we have proved if $F_{\alpha_0,\gamma_0}= 0$, then $F_{\alpha,\gamma_0}=0$ for all $\alpha\in\Z$. In particular, we get $F_{1,\gamma_0}=F_{-1,\gamma_0}= 0$. By Lemma \ref{pre4}, for any $\gamma\in\Z$, we can obtain a $\beta_0\neq 0$ such that $F_{\beta_0,\gamma}= 0$. Hence $F_{\alpha,\gamma}= 0$ for any $\alpha,\gamma\in\Z$.
\end{proof}

By Lemma \ref{pre6} and the proof of Lemma \ref{nonzero}, we can conclude:
\begin{pro}
An intermediate series $B$-module belonging to the first case must be trivial.
\end{pro}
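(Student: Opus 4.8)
The plan is to localize the statement to the two truncated summands of $V$ and there to rerun the mechanism from the proof of Lemma~\ref{nonzero}. By Lemma~\ref{pre6} a first-case module decomposes, as a $B$-module, into $V=W_1\oplus W_2\oplus\bigoplus_{p<\gamma<q}V_\gamma$, the middle $V_\gamma$'s being trivial by the construction preceding that lemma; here $W_1=\bigoplus_{\gamma\le p}V_\gamma$, $W_2=\bigoplus_{\gamma\ge q}V_\gamma$, and on $W_1$ one has $f_{\alpha,\gamma}(\lambda,\partial)=F_{\alpha,\gamma}((\alpha+\gamma+C_1)\lambda+\alpha\partial)$ for the pairs with $\gamma\le p$ and $\alpha+\gamma\le p$, while $f_{\alpha,\gamma}=0$ for the remaining pairs with $\gamma\le p$ (symmetrically on $W_2$, with a constant $C_2$). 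So it is enough to prove that $W_1$ is trivial, the argument for $W_2$ being the mirror image, with $L_{-1}(w)$ and Lemma~\ref{pre4}(b) replacing $L_1(w)$ and Lemma~\ref{pre4}(a).

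For $W_1$ I would first note that, since $W_1$ is a $B$-submodule while $L_1(w)_\lambda v_p$ lands in $V_{p+1}$, which meets $W_1$ only in $0$, necessarily $f_{1,p}=0$; by Lemma~\ref{pre4}(a) this forces $f_{\alpha,\gamma}=0$ for all pairs with $\gamma\le p$ and $\alpha+\gamma\ge p+1$. It then remains to kill $f_{\alpha^*,\gamma^*}$ for $\gamma^*\le p$, $\alpha^*+\gamma^*\le p$, $\alpha^*\ne0$. Fixing such a pair, I would set $\gamma=\gamma^*$ and $\alpha+\beta=\alpha^*$ in~\eqref{use} with $\beta$ a large positive integer, so that $\alpha=\alpha^*-\beta$ is large and negative, and then copy the argument of Lemma~\ref{nonzero}: the first of the two products on the right of~\eqref{use} has the factor $f_{\beta,\gamma^*}$, which lies in the region just killed, so that product is $0$; the second product is $f_{\alpha,\gamma^*}\cdot f_{\beta,\alpha+\gamma^*}$, and both of those index pairs lie inside the $C_1$-region of $W_1$, so, substituting the $C_1$-form of Lemma~\ref{pre6} and then evaluating at $\lambda=\mu=0$, it becomes $F_{\alpha,\gamma^*}(\alpha\partial)\,F_{\beta,\alpha+\gamma^*}(\beta\partial)$, which must therefore be the zero polynomial in $\C[\partial]$; since $\C[\partial]$ is a domain and $\alpha,\beta\ne0$, one of $F_{\alpha,\gamma^*}$, $F_{\beta,\alpha+\gamma^*}$ is zero, so that whole second product vanishes identically. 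The right-hand side of~\eqref{use} being $0$ and $\beta\lambda-\alpha\mu$ a nonzero polynomial, we get $f_{\alpha^*,\gamma^*}=0$. This exhausts $W_1$; by symmetry $W_2$ is trivial, the middle summands are trivial, and hence $V$ is trivial.

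The reason this is not an immediate corollary of Lemma~\ref{nonzero} — and the place that needs the most care — is that Lemma~\ref{nonzero} was deduced from~\eqref{use+}, which builds in the single constant $C$ of a second-case module, whereas a truncated piece carries its own constant ($C_1$ on $W_1$, $C_2$ on $W_2$) together with the vanishing region supplied by Lemma~\ref{pre4} just past its cut-off level. So at every use of~\eqref{use} the auxiliary index $\beta$ and its sign have to be chosen so that each of the two products on the right either has a factor in the Lemma~\ref{pre4} region or is a product of two coefficients inside the $C_1$-region of $W_1$, while the remaining factor $\beta\lambda-\alpha\mu$ stays nonzero. Verifying these index inequalities is the bulk of the work; with them in hand, the "kill one factor, then invoke that $\C[\partial]$ is a domain" step is exactly the one in the proof of Lemma~\ref{nonzero}.
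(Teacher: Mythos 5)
Your proof is correct, and it fills in honestly what the paper disposes of in one line (``by Lemma~\ref{pre6} and the proof of Lemma~\ref{nonzero}''). The overall route is the same --- restrict to the truncated summands, use the Lemma~\ref{pre6} normal form, and propagate vanishing through \eqref{use} --- but your propagation step is genuinely different from the one in Lemma~\ref{nonzero}: instead of that lemma's degree count (a pure-$\partial$ monomial appearing on one side only), you choose $\beta$ so large that $f_{\beta,\gamma^*}$ falls in the region already killed by Lemma~\ref{pre4}, then evaluate \eqref{use} at $\lambda=\mu=0$ to get $F_{\alpha,\gamma^*}(\alpha\partial)F_{\beta,\alpha+\gamma^*}(\beta\partial)=0$ and invoke that $\C[\partial]$ is a domain. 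That is cleaner, and you are right that it is needed: Lemma~\ref{nonzero} as stated is deduced from \eqref{use+} with a single constant $C$, so it cannot be quoted verbatim for a first-case module carrying two constants $C_1,C_2$ --- a point the paper silently elides. The one small omission is $\alpha^*=0$: triviality should mean \emph{all} structure coefficients vanish, including $f_{0,\gamma}$. This is a one-line addendum --- once $f_{\alpha,\gamma}=0$ for every $\alpha\neq 0$, setting $\alpha=1$, $\beta=-1$ in \eqref{use} gives $(-\lambda-\mu)f_{0,\gamma}(\lambda+\mu,\partial)=0$, hence $f_{0,\gamma}=0$ --- but it should be said.
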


Thus we need only to focus on classifying the $B$-modules in the second class.

\begin{pro}\label{almost1}
Let $V=\bigoplus_{\gamma\in\Z}\C[\partial]v_\gamma$ be a nontrivial free intermediate $B$-module with $\C[\partial]$-basis $\{v_\gamma\,|\,\gamma\in\Z\}$ and structure coefficients $\{f_{\alpha,\gamma}(\lambda,\partial)=F_{\alpha,\gamma}((\alpha+\gamma+C)\lambda+\alpha\partial)\,|\,\alpha,\gamma\in\Z\}$. If $C\notin\Z$, then $V$ must be isomorphic to some $V_{C,D}$ with $C,D\in\C$ which defined in Section 1.
\end{pro}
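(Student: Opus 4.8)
The plan is to work with the polynomials $F_{\alpha,\gamma}(s)$ furnished by Lemma~\ref{lemm-F}: first pin down their degrees, then normalize their leading coefficients by a change of basis, and finally solve for their constant terms. Throughout, the hypothesis $C\notin\Z$ is used to ensure that $\alpha+\gamma+C\neq0$ and $\gamma+C\neq0$ for all $\alpha,\gamma\in\Z$; in particular $f_{0,\gamma}(\lambda,\partial)=(\gamma+C)\lambda$ forces $F_{0,\gamma}(s)=s$, and since $V$ is nontrivial, Lemma~\ref{nonzero} gives $F_{\alpha,\gamma}\neq0$ whenever $\alpha\neq0$. To bound the degrees I would set $\beta=0$ in (\ref{use+}); after cancelling the common factor $\mu$ this becomes the one-variable functional equation $(\alpha+\gamma+C)F(u+\alpha\mu)-\alpha F(u+(\alpha+\gamma+C)\mu)=(\gamma+C)F(u)$ for $F:=F_{\alpha,\gamma}$, with $u=(\alpha+\gamma+C)\lambda+\alpha\partial$ and $\mu$ a parameter. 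Writing $n=\deg F$ and comparing the coefficient of $u^{\,n-2}\mu^{2}$ — on which the right-hand side vanishes — forces the leading coefficient of $F$ times $\binom{n}{2}\,\alpha\,(\alpha+\gamma+C)\,(\gamma+C)$ to be zero, whence $\binom{n}{2}=0$ and $\deg F_{\alpha,\gamma}\le1$ for all $\alpha,\gamma$. Write $F_{\alpha,\gamma}(s)=p_{\alpha,\gamma}s+q_{\alpha,\gamma}$, so $p_{0,\gamma}=1$ and $q_{0,\gamma}=0$.

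Next I would normalize the leading coefficients $p_{\alpha,\gamma}$. Taking the top (degree-two) homogeneous part of (\ref{use+}) and comparing the coefficients of $\lambda^{2}$ and of $\mu^{2}$ gives $p_{\alpha+\beta,\gamma}=p_{\beta,\gamma}\,p_{\alpha,\beta+\gamma}$ for $\beta\neq0$ and $p_{\alpha+\beta,\gamma}=p_{\alpha,\gamma}\,p_{\beta,\alpha+\gamma}$ for $\alpha\neq0$ (the cancellations of $\alpha+\beta+\gamma+C$ again using $C\notin\Z$). Taking $\beta=-\alpha$ in the second relation together with $p_{0,\gamma}=1$ shows $p_{\alpha,\gamma}\neq0$ for all $\alpha,\gamma$, and taking $\gamma=0$ in the first gives $p_{\alpha,\gamma}=p_{\alpha+\gamma,0}/p_{\gamma,0}$. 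Therefore replacing the $\C[\partial]$-basis $v_{\gamma}$ by $p_{\gamma,0}\,v_{\gamma}$ — a $B$-module isomorphism — we may assume $p_{\alpha,\gamma}=1$ for all $\alpha,\gamma$, so that $f_{\alpha,\gamma}(\lambda,\partial)=(\alpha+\gamma+C)\lambda+\alpha\partial+q_{\alpha,\gamma}$ with $q_{0,\gamma}=0$.

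It remains to prove $q_{\alpha,\gamma}=\alpha D$ for a single constant $D\in\C$. Substituting the above form of $f_{\alpha,\gamma}$ into (\ref{use}) and separating by total degree (the degree-two part is an identity, by a direct check), the degree-$\le1$ part collapses — after cancelling factors $\alpha+\beta+\gamma+C$ — to the linear relation $\beta\,q_{\alpha+\beta,\gamma}=(\alpha+\beta+\gamma+C)\,q_{\beta,\gamma}+\beta\,q_{\alpha,\beta+\gamma}-(\alpha+\gamma+C)\,q_{\beta,\alpha+\gamma}$, while the degree-zero part gives the quadratic relation $q_{\beta,\gamma}\,q_{\alpha,\beta+\gamma}=q_{\alpha,\gamma}\,q_{\beta,\alpha+\gamma}$. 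Set $h(\gamma):=q_{1,\gamma}$. The linear relation with $\beta=1$ expresses every $q_{\alpha,\gamma}$ through the single sequence $h$, and its instance $\beta=2$, $\alpha=1$ then forces the second-order recursion $(\gamma+C)\bigl(h(\gamma)-h(\gamma+1)\bigr)=(\gamma+C+3)\bigl(h(\gamma+1)-h(\gamma+2)\bigr)$. Since $C\notin\Z$ the multiplier is a nonzero scalar, so the consecutive differences $h(\gamma)-h(\gamma+1)$ either all vanish or none does. If $h$ is constant, $h\equiv D$, the linear relation together with $q_{0,\gamma}=0$ gives $q_{\alpha,\gamma}=\alpha D$ at once. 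If instead $h(\gamma)\neq h(\gamma+1)$ for all $\gamma$, I would use that the quadratic relation with $\beta=-1$, combined with the $\beta=1$ consequence $q_{\alpha,\gamma+1}-q_{\alpha,\gamma}=\alpha\bigl(h(\alpha+\gamma)-h(\gamma)\bigr)$, factors as $\bigl(h(\gamma-1)-h(\gamma)\bigr)\bigl(q_{-1,\gamma}+h(\gamma)\bigr)=0$, forcing $q_{-1,\gamma}=-h(\gamma)$; but the linear relation with $\beta=1,\alpha=-1$ gives $q_{-1,\gamma}=(\gamma+C)h(\gamma-1)-(\gamma+C+1)h(\gamma)$, and equating these and cancelling $\gamma+C$ yields $h(\gamma)=h(\gamma-1)$, a contradiction. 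Hence $q_{\alpha,\gamma}=\alpha D$, so $f_{\alpha,\gamma}(\lambda,\partial)=(\alpha+\gamma+C)\lambda+\alpha(\partial+D)$ and $V\cong V_{C,D}$.

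I expect the last step — reconciling the linear and quadratic constraints on the constant terms $q_{\alpha,\gamma}$ and ruling out a non-constant $h(\gamma)=q_{1,\gamma}$ — to be the main obstacle; by contrast the degree bound is routine once one isolates the coefficient of $u^{\,n-2}\mu^{2}$, which is exactly where the hypothesis $C\notin\Z$ is indispensable (it is also what makes all the cancellations of linear factors $\cdot+C$ legitimate).
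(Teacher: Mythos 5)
Your proof is correct and follows essentially the same three-stage route as the paper's: a degree bound from the $\beta=0$ specialization of (\ref{use+}), normalization of the leading coefficients $p_{\alpha,\gamma}=C_{\alpha+\gamma}/C_{\gamma}$ by rescaling the $\C[\partial]$-basis, and then linear recursions pinning the constant terms to $\alpha D$. Your two local deviations --- extracting the $u^{\,n-2}\mu^{2}$ coefficient for the degree bound rather than the $\mu^{n}$ coefficient, and invoking the constant-term relation $q_{\beta,\gamma}q_{\alpha,\beta+\gamma}=q_{\alpha,\gamma}q_{\beta,\alpha+\gamma}$ to exclude a non-constant $q_{1,\gamma}$ --- are both sound, and the latter in fact supplies an explicit argument where the paper only asserts that its equations \eqref{eq-k1} and \eqref{eq-k2} force $K_{1,\gamma}$ to be constant.
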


\begin{proof}
Let $\beta= 0$ in equation (\ref{use+}), we get
\begin{eqnarray*}
&&\alpha F_{\alpha,\gamma}((\alpha+\gamma+C)(\lambda+\mu)+\alpha\partial)\\
&=&\!\!\!
(\alpha+\gamma+C)F_{\alpha,\gamma}((\alpha+\gamma+C)\lambda+\alpha(\partial+\mu))-(\gamma+C)F_{\alpha,\gamma}((\alpha+\gamma+C)\lambda+\alpha\partial).
\end{eqnarray*}
If $n_{\alpha,\gamma}\neq 0$, comparing the coefficients of $\mu^{n_{\alpha,\gamma}}$, we obtain
\begin{equation}\label{eq-degree}
\alpha(\alpha+\gamma+C)^{n_{\alpha,\gamma}}=(\alpha+\gamma+C)\alpha^{n_{\alpha,\gamma}},
\end{equation}
which implies $n_{\alpha,\gamma}=1$ for $\alpha\neq 0$, since $C\notin\Z$. Because $F_{0,\gamma}((\gamma+C)\lambda)=f_{0,\gamma}(\lambda,\partial)=(\gamma+C)\lambda$, we can assume $F_{0,\gamma}(x)=x$, i.e, $n_{\alpha,\gamma}=1$ for any $\gamma\in\Z$.

Since for any $\alpha,\gamma\in\mathbb{Z}$, $n_{\alpha,\gamma}\leq 1$, we can suppose $F_{\alpha,\gamma}((\alpha+\gamma+C)\lambda+\alpha\partial)=P_{\alpha,\gamma}((\alpha+\gamma+C)\lambda+\alpha\partial)+R_{\alpha,\gamma}$, where $P_{\alpha,\gamma},R_{\alpha,\gamma}\in\C$. We already have $P_{0,\gamma}=1$ and $R_{0,\gamma}=0$ for any $\gamma\in\Z$.

Now equation (\ref{use+}) becomes
\begin{align}\label{sleepy}
\begin{split}
&(\beta\lambda-\alpha\mu)(P_{\alpha+\beta,\gamma}((\alpha+\beta+\gamma+C)(\lambda+\mu)+(\alpha+\beta)\partial)+R_{\alpha+\beta,\gamma})\\
=&(P_{\beta,\gamma}((\beta+\gamma+C)\mu+\beta(\partial+\lambda))+R_{\beta,\gamma})(P_{\alpha,\beta+\gamma}((\alpha+\beta+\gamma+C)\lambda+\alpha\partial)+R_{\alpha,\beta+\gamma})\\
&-(P_{\alpha,\gamma}((\alpha+\gamma+C)\lambda+\alpha(\partial+\mu))+R_{\alpha,\gamma})(P_{\beta,\alpha+\gamma}((\alpha+\beta+\gamma+C)\mu+\beta\partial)+R_{\beta,\alpha+\gamma}).
\end{split}
\end{align}

Since $C\notin\Z$, $\alpha+\beta+\gamma+C\neq 0$.  Comparing the coefficients of $\lambda^2$ in equation (\ref{sleepy}), we have $\beta(\alpha+\beta+\gamma+C) P_{\alpha+\beta,\gamma}=\beta(\alpha+\beta+\gamma+C) P_{\alpha,\beta+\gamma}P_{\beta,\gamma}$, which gives us $P_{\alpha+\beta,\gamma}=P_{\alpha,\beta+\gamma}P_{\beta,\gamma}$ for $\beta\neq 0$. Since $P_{0,\gamma}=1$, $P_{\alpha+\beta,\gamma}=P_{\alpha,\beta+\gamma}P_{\beta,\gamma}$ also holds for $\beta=0$. Thus we always have
\begin{equation}\label{work1}
P_{\alpha+\beta,\gamma}=P_{\alpha,\beta+\gamma}P_{\beta,\gamma}.
\end{equation}

%If $\alpha+\beta+\gamma+C=0$ and $\alpha+\beta\neq 0$, we can obtain $\beta(\alpha+\beta)P_{\alpha+\beta,\gamma}=\alpha\beta P_{\alpha,\beta+\gamma}P_{\beta,\gamma}-\beta(\alpha+\gamma+C)P_{\beta,\alpha+\gamma}P_{\alpha,\gamma}$ and $\alpha\beta P_{\alpha,\beta+\gamma}P_{\beta,\gamma}=\alpha\beta P_{\beta,\alpha+\gamma}P_{\alpha,\gamma}$ by comparing the coefficients of $\lambda\partial$ and $\partial^2$ in equation (\ref{sleepy}). Thus $P_{\alpha+\beta,\gamma}=P_{\alpha,\beta+\gamma}P_{\beta,\gamma}$ for $\alpha,\beta\neq 0$. Note if $\alpha,\beta=0$, we have $P_{\alpha+\beta,\gamma}=P_{\alpha,\beta+\gamma}P_{\beta,\gamma}$ naturally.

%If $\alpha+\beta+\gamma+C=0$ and $\alpha+\beta= 0$, $P_{\alpha+\beta,\gamma}=P_{0,\gamma}=1$. In equation (\ref{sleepy}), let's substitute $\gamma$ with $\gamma+\beta$ and compare the coefficients of $\mu^2$. Then we have $-\alpha(\alpha+\beta+\gamma+\beta+C)P_{\alpha+\beta,\gamma+\beta}=-\alpha(\alpha+\beta+\gamma+\beta+C)P_{\alpha,\beta+\gamma}P_{\beta,\alpha+\beta+\gamma}$, which means $P_{\alpha,\beta+\gamma}P_{\beta,\gamma}=P_{0,\gamma+\beta}=1$.

For any $\alpha\geq 1$, applying (\ref{work1}) repeatedly, we can obtain
\begin{align*}
P_{\alpha,\gamma}&=P_{1,\gamma}P_{\alpha-1,\gamma+1}\\
                  &=P_{1,\gamma}P_{1,\gamma+1}P_{\alpha-2,\gamma+2}\\
                  &=\cdots \cdots\\
                  &=P_{1,\gamma}P_{1,\gamma+1}\cdots P_{2,\alpha+\gamma-2}\\
                  &=P_{1,\gamma}P_{1,\gamma+1}\cdots P_{1,\alpha+\gamma-2}P_{1,\alpha+\gamma-1},
\end{align*}
that is
\begin{equation}\label{work2}
P_{\alpha,\gamma}=\prod\limits_{i=0}^{\alpha-1}P_{1,\gamma+i}.
\end{equation}
Similarly, if $\alpha\leq -1$, we have
\begin{equation}\label{work3}
P_{\alpha,\gamma}\prod\limits_{i=\alpha}^{-1}P_{1,\gamma+i}=P_{0,\alpha+\gamma}.
\end{equation}
Since $P_{\alpha,\gamma}$ are not equal to zero, we can choose a proper family of constants $\Gamma=\{C_\alpha\in\C|C_\alpha\neq 0,\alpha\in\Z\}$ such that $P_{1,\gamma}=\frac{C_{\gamma+1}}{C_\gamma}$. Let $\alpha=1$ and $\beta=0$ in equation (\ref{work1}), we get $P_{1,\gamma}=P_{0,\gamma}P_{1,\gamma}$. Thus $P_{0,\gamma}=1$ for any $\gamma$. Now equation (\ref{work2}) and (\ref{work3}) turn into $P_{\alpha,\gamma}=\prod\limits_{i=0}^{\alpha-1}\frac{C_{\gamma+i+1}}{C_{\gamma+i}}$ if $\alpha\geq 1$ and $P_{\alpha,\gamma}\prod\limits_{i=\alpha}^{-1}\frac{C_{\gamma+i+1}}{C_{\gamma+i}}=1$ if $\alpha\leq -1$, which suggest $P_{\alpha,\gamma}=\frac{C_{\alpha+\gamma}}{C_\gamma}$ for any $\alpha\in\Z$.

Now let $R_{\alpha,\gamma}=\frac{C_{\alpha+\gamma}}{C_\gamma}\cdot\alpha K_{\alpha,\gamma}$ for some $K_{\alpha,\gamma}\in\C$, we have
\begin{equation*}
F_{\alpha,\gamma}((\alpha+\gamma+C)\lambda+\alpha\partial)=\frac{C_{\alpha+\gamma}}{C_\gamma}((\alpha+\gamma+C)\lambda+\alpha\partial+\alpha K_{\alpha,\gamma}).
\end{equation*}

Let's change the $\C[\partial]$-basis of $V$ $v_\gamma$ to $v'_\gamma=C_\gamma v_\gamma$. We have
\begin{equation*}
F_{\alpha,\gamma}((\alpha+\gamma+C)\lambda+\alpha\partial)=(\alpha+\gamma+C)\lambda+\alpha\partial+\alpha K_{\alpha,\gamma}.
\end{equation*}

Consequently equation (\ref{sleepy}) becomes
\begin{align}\label{use10}
\begin{split}
&(\beta\lambda-\alpha\mu)((\alpha+\beta+\gamma+C)(\lambda+\mu)+(\alpha+\beta)\partial+(\alpha+\beta)K_{\alpha+\beta,\gamma})\\
=&((\beta+\gamma+C)\mu+\beta(\partial+\lambda)+\beta K_{\beta,\gamma})((\alpha+\beta+\gamma+C)\lambda+\alpha\partial+\alpha K_{\alpha,\beta+\gamma})\\
&-((\alpha+\gamma+C)\lambda+\alpha(\partial+\mu)+\alpha K_{\alpha,\gamma})((\alpha+\beta+\gamma+C)\mu+\beta\partial+\beta K_{\beta,\alpha+\gamma}).
\end{split}
\end{align}

Comparing the coefficients of $\lambda$ and $\partial$ in  equation (\ref{use10}), we get
\begin{equation*}
(\alpha+\beta) K_{\alpha+\beta,\gamma}=\alpha K_{\alpha,\beta+\gamma}+(\alpha+\beta+\gamma+C)K_{\beta,\gamma}-(\alpha+\gamma+C)K_{\beta,\alpha+\gamma},
\end{equation*}
\begin{equation}\label{labour1}
K_{\alpha,\beta+\gamma}+K_{\beta,\gamma}=K_{\beta,\alpha+\gamma}+K_{\alpha,\gamma}.
\end{equation}

Let $\alpha=\beta=1$ in the first equation of \eqref{labour1}, we have
\begin{equation}\label{eq-k1}
2K_{2,\gamma}=K_{1,1+\gamma}+(2+\gamma+C)K_{1,\gamma}-(1+\gamma+C)K_{1,1+\gamma}
=(2+\gamma+C)K_{1,\gamma}-(\gamma+C)K_{1,1+\gamma}.
\end{equation}
Let $\alpha=2,\beta=1$ in the second equations of \eqref{labour1}, we have
\begin{eqnarray}\label{eq-k2}
K_{2,1+\gamma}+K_{1,\gamma}=K_{1,\gamma+2}+K_{2,\gamma}.
\end{eqnarray}
By equations \eqref{eq-k1} and \eqref{eq-k2}, we have $K_{1,\gamma}=D_1$ for some constant $D_1\in\C$. By the second equation in \eqref{labour1}, there is $K_{\alpha,\gamma}=D_{\alpha}$ for some constant $D_{\alpha}\in\C$. By the first equation in \eqref{labour1}, we have $D_{\alpha}=D$ for all $\alpha\neq0$.

Note that we can suppose $K_{0,\gamma}=D$ without affecting the expression of $F_{0,\gamma}$, therefore we get $K_{\alpha,\beta}=D$ for all $\alpha,\beta\in\Z$. Thus we have
\begin{equation*}
F_{\alpha,\gamma}((\alpha+\gamma+C)\lambda+\alpha\partial)=(\alpha+\gamma+C)\lambda+\alpha(\partial+D)
\end{equation*}
for all $\alpha,\gamma\in\Z$.
This means $L_\alpha(w)_\lambda v'_\gamma=((\alpha+\gamma+C)\lambda+\alpha(\partial+D))v'_{\alpha+\gamma}$, that is, $V$ is isomorphic to $V_{C,D}$.
\end{proof}

\begin{pro}\label{almost2}
Let $V=\bigoplus_{\gamma\in\Z}\C[\partial]v_\gamma$ be a nontrivial free intermediate $B$-module with $\C[\partial]$-basis $\{v_\gamma\,|\,\gamma\in\Z\}$ and structure coefficients $\{f_{\alpha,\gamma}=F_{\alpha,\gamma}\,|\,\alpha,\gamma\in\Z\}$. If $C\in\Z$, then $V$ must be isomorphic to $V_{C,D}$, $V_D$ or $V'_D$ for some $C,D\in\C$, which defined in Section 1.
\end{pro}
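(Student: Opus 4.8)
The plan is to reduce to $C=0$ (legitimate because $V_{C,D}\cong V_{C+n,D}$, so a relabelling $v_\gamma\mapsto v_{\gamma-C}$ of the basis kills the structure constant), and then to split the index pairs $(\alpha,\gamma)$ into a \emph{generic} region $\{\alpha\gamma(\alpha+\gamma)\neq0\}$, on which the argument of Proposition~\ref{almost1} essentially goes through, and the two exceptional strips $\{\gamma=0\}$ and $\{\alpha+\gamma=0\}$, which I will tie together by a single scalar functional equation. Throughout, by Lemma~\ref{lemm-F} we have $f_{\alpha,\gamma}(\lambda,\partial)=F_{\alpha,\gamma}\big((\alpha+\gamma)\lambda+\alpha\partial\big)$ and $f_{0,\gamma}(\lambda,\partial)=\gamma\lambda$.

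First I would treat the generic region. The $\beta=0$ case of \eqref{use}, namely \eqref{use2}, becomes after dividing by $\mu$ and inserting the form of Lemma~\ref{lemm-F} a one-variable identity for $F_{\alpha,\gamma}$ whose $\mu^{2}$-coefficient forces $\deg F_{\alpha,\gamma}\le1$ whenever $(\alpha,\gamma)$ is generic; write $F_{\alpha,\gamma}(s)=P_{\alpha,\gamma}s+R_{\alpha,\gamma}$ there, with $P_{0,\gamma}=1$, $R_{0,\gamma}=0$, and $P_{\alpha,\gamma}\neq0$ (a zero $P_{\alpha,\gamma}$ propagates through the multiplicativity below to a whole band of constant $F$'s, which forces $V$ trivial by a degree count as in Lemma~\ref{nonzero}). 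Substituting this linear ansatz into \eqref{use}, for index triples whose five occurring pairs all remain generic --- possible since the bad strips are thin --- the coefficient of $\lambda^{2}$ gives $P_{\alpha+\beta,\gamma}=P_{\alpha,\beta+\gamma}P_{\beta,\gamma}$, so the rescaling $v_\gamma\mapsto C_\gamma v_\gamma$ of Proposition~\ref{almost1} reduces this to $P_{\alpha,\gamma}=1$, and the coefficients of $\lambda$ and $\partial$ give the additive system \eqref{labour1}, whose only solution is $R_{\alpha,\gamma}=\alpha D$ for one constant $D\in\C$. Hence $f_{\alpha,\gamma}(\lambda,\partial)=(\alpha+\gamma)\lambda+\alpha(\partial+D)$ on the generic region.

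Now the two strips. By Lemma~\ref{lemm-F} the $\lambda$-coefficient of $f_{\alpha,-\alpha}$ vanishes, so $f_{\alpha,-\alpha}(\lambda,\partial)=h_{\alpha}(\partial)$ for a polynomial $h_{\alpha}$, and $f_{\alpha,0}(\lambda,\partial)=g_{\alpha}(\lambda+\partial)$ for a polynomial $g_{\alpha}$. The decisive step is to specialise \eqref{use} at $\gamma=-\beta$ with $\alpha,\beta,\alpha+\beta,\alpha-\beta$ all nonzero: then $f_{\alpha+\beta,-\beta}$, $f_{\alpha,-\beta}$, $f_{\beta,\alpha-\beta}$ are generic, equal to the linear expressions above, whereas $f_{\beta,-\beta}(\mu,\partial+\lambda)=h_{\beta}(\partial+\lambda)$ and $f_{\alpha,0}(\lambda,\partial)=g_{\alpha}(\lambda+\partial)$; a direct expansion shows every $\mu$-term on the right cancels and \eqref{use} collapses to
\[
h_{\beta}(t)\,g_{\alpha}(t)=\alpha\beta\,(t+D)^{2},\qquad t=\lambda+\partial .
\]
Fixing one index, $g_{\alpha}(t)=\alpha\,G_{0}(t)$ and $h_{\alpha}(t)=\alpha\,G_{1}(t)$ for fixed polynomials $G_{0},G_{1}$ with $G_{0}G_{1}=(t+D)^{2}$, so by unique factorization in $\C[t]$ the pair $(G_{0},G_{1})$ is, up to a nonzero scalar, one of $\big((t+D)^{2},1\big)$, $\big(1,(t+D)^{2}\big)$, $\big((t+D),(t+D)\big)$. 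A final rescaling of $v_{0}$ alone --- which multiplies $g_{\alpha}$ and $h_{\alpha}$ by reciprocal scalars, fixes their product, and leaves the generic coefficients unchanged --- removes that scalar, and comparing the resulting $f_{\alpha,0},f_{\alpha,-\alpha}$ with the definitions of Section~1 identifies the three possibilities with $V_{D}$, $V'_{D}$, and $V_{0,D}\cong V_{C,D}$ respectively.

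The step I expect to cost the most is the generic-region analysis: one must verify that the chain of substitutions of Proposition~\ref{almost1} can be run using only index configurations that never touch $\{\gamma=0\}$ or $\{\alpha+\gamma=0\}$ (so the degree bound, the linear ansatz, and the nonvanishing of the denominators $\alpha+\beta+\gamma$ all persist), and that the rescaling used there leaves the single residual rescaling of $v_{0}$ available for the end of the proof. The other delicate point is the explicit expansion collapsing the $\gamma=-\beta$ specialisation of \eqref{use} to $h_{\beta}(t)g_{\alpha}(t)=\alpha\beta(t+D)^{2}$, in particular checking that all $\mu$-dependence on the right-hand side cancels.
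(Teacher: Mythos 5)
Your proposal is correct in substance, and for the hard part of the argument --- the two exceptional strips $\{\gamma=0\}$ and $\{\alpha+\gamma=0\}$ --- it takes a genuinely different and cleaner route than the paper. The paper first runs a degree count on \eqref{use++} (its inequalities \eqref{whatnow}, \eqref{what}) to show that a degree-$\ge 2$ coefficient can live on only one of the two strips at a time, with the complementary strip forced to degree $0$, and then pins down the coefficients by substituting many specific small triples ($\alpha=\beta=1,\gamma=0$; $\alpha=1,\beta=2,\gamma=-1$; etc.) followed by ``induction'' and ``by symmetry.'' You instead specialise \eqref{use} at $\gamma=-\beta$ with $\alpha,\beta,\alpha+\beta,\alpha-\beta\neq 0$ and observe that, once the generic region is known to carry $f_{\alpha,\gamma}=(\alpha+\gamma)\lambda+\alpha(\partial+D)$, all $\mu$-dependence cancels and the identity collapses to
\begin{equation*}
h_{\beta}(t)\,g_{\alpha}(t)=\alpha\beta\,(t+D)^{2},\qquad t=\lambda+\partial,
\end{equation*}
which I have verified by direct expansion. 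Unique factorization in $\C[t]$, plus the residual rescaling of $v_{0}$, then yields exactly the trichotomy $V_{D}$, $V'_{D}$, $V_{0,D}\cong V_{C,D}$. This buys a transparent explanation of \emph{why} precisely three families occur and why no mixed case arises, where the paper's ``similar treatment as above'' leaves that implicit. The price, which you correctly flag, is that your strip equation takes the generic-region form as input, so the burden shifts entirely to verifying that the chain of substitutions from Proposition \ref{almost1} (the degree bound via the $\mu^{2}$-coefficient, the multiplicativity $P_{\alpha+\beta,\gamma}=P_{\alpha,\beta+\gamma}P_{\beta,\gamma}$, and the additive system \eqref{labour1}) can be run entirely on triples avoiding the two strips; this requires re-routing some of the paper's step-size-one chains, e.g.\ the relation determining $K_{1,-2}$ cannot use the instance of \eqref{labour1} involving $K_{1,0}$. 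That bookkeeping is real work but not an obstruction --- the generic region is connected by abundant admissible triples --- and the paper itself glosses over the same point when it asserts that the $C\in\Z$ case with all $n_{\alpha,\gamma}<2$ follows ``by the similar proof of Proposition \ref{almost1}'' even though the factor $\alpha+\beta+\gamma+C$ it divides by there can now vanish. One small sharpening worth recording: your degree bound should (and does) come from the $\mu^{2}$-coefficient of \eqref{use2}, giving $\alpha(\alpha+\gamma)^{2}=(\alpha+\gamma)\alpha^{2}$ and hence $\gamma=0$, which is strictly stronger than the paper's top-degree comparison \eqref{eq-degree} (the latter alone does not exclude odd degrees $\ge 3$ at pairs with $\gamma=-2\alpha$ when $C\in\Z$).
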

\begin{proof}

%Let $\beta= 0$ in equation (\ref{use+}), we get
%\begin{eqnarray*}
%&&\alpha F_{\alpha,\gamma}((\alpha+\gamma+C)(\lambda+\mu)+\alpha\partial)\\
%&=&\!\!\!
%(\alpha+\gamma+C)F_{\alpha,\gamma}((\alpha+\gamma+C)\lambda+\alpha(\partial+\mu))-(\gamma+C)F_{\alpha,\gamma}((\alpha+\gamma+C)\lambda+\alpha\partial).
%\end{eqnarray*}
%If $n_{\alpha,\gamma}\neq 0$, comparing the coefficients of $\mu^n$, we obtain
%\begin{equation*}
%\alpha(\alpha+\gamma+C)^{n_{\alpha,\gamma}}=(\alpha+\gamma+C)\alpha^{n_{\alpha,\gamma}},
%\end{equation*}
%which implies $n_{\alpha,\gamma}=1$ for $\alpha\neq 0,\gamma+C\neq 0$ and $\alpha+\gamma+C\neq 0$. Since $n_{0,\gamma}\leq 1$ for any $\gamma$, it is only possible that $n_{\alpha,\gamma}\geq 2$ for $\gamma+C= 0$ or $\alpha+\gamma+C= 0$. Thus we must have $C\in\Z$.

For convenience, we change the $\C[\partial]$-basis $\{v_\gamma\,|\,\gamma\in\Z\}$ into $\{v'_\gamma=v_{\gamma-C}\,|\,\gamma\in\Z\}$. Then the structure coefficients associated with $\{v'_\gamma\,|\,\gamma\in\Z\}$ are $\{f_{\alpha,\gamma}(\lambda,\partial)=F_{\alpha,\gamma}((\alpha+\gamma)\lambda+\alpha\partial)\,|\,\alpha,\gamma\in\Z\}$, then equation (\ref{use+}) becomes
\begin{align}\label{use++}
\begin{split}
&(\beta\lambda-\alpha\mu)F_{\alpha+\beta,\gamma}((\alpha+\beta+\gamma)(\lambda+\mu)+(\alpha+\beta)\partial)\\
=&F_{\beta,\gamma}((\beta+\gamma)\mu+\beta(\partial+\lambda))F_{\alpha,\beta+\gamma}((\alpha+\beta+\gamma)\lambda+\alpha\partial)\\
&-F_{\alpha,\gamma}((\alpha+\gamma)\lambda+\alpha(\partial+\mu))F_{\beta,\alpha+\gamma}((\alpha+\beta+\gamma)\mu+\beta\partial).
\end{split}
\end{align}
By equation \eqref{eq-degree}, it is only possible that $n_{\alpha,\gamma}\geq 2$ for $\gamma= 0$ or $\alpha+\gamma=0$.

If $n_{\alpha,\gamma}< 2$ for all $\alpha,\gamma\in\Z$,  by the similar proof of Proposition \ref{almost1}, we also have the module $V$ is isomorphism to $V_{C,D}$.

If $n_{\beta_0,0}\geq 2$ for some $\beta_0\neq 0$. Let $\alpha+\gamma=0$ in equation (\ref{use++}), we get
\begin{align*}
\begin{split}
&(\beta\lambda-\alpha\mu)F_{\alpha+\beta,\gamma}(\beta(\lambda+\mu)+(\alpha+\beta)\partial)\\
=&F_{\beta,\gamma}((\beta+\gamma)\mu+\beta(\partial+\lambda))F_{\alpha,\beta+\gamma}(\beta\lambda+\alpha\partial)\\
&-F_{\alpha,\gamma}(\alpha(\partial+\mu))F_{\beta,0}(\beta(\mu+\partial)),
\end{split}
\end{align*}
which means
\begin{equation}\label{whatnow}
n_{\alpha,\gamma}+n_{\beta,0}\leq max\{n_{\alpha+\beta,\gamma}+1,\ n_{\beta,\gamma}+n_{\alpha,\beta+\gamma}\}.
\end{equation}

If $\alpha,\gamma\neq 0$ and $\alpha+\gamma=0$, $\beta_0+\gamma\neq 0$, we have $n_{\alpha+\beta_0,\gamma},n_{\beta_0,\gamma},n_{\alpha,\beta_0+\gamma}\leq 1$, which implies
\begin{equation*}
n_{\alpha,\gamma}+n_{\beta_0,0}\leq max\{n_{\alpha+\beta_0,\gamma}+1,\ n_{\beta_0,\gamma}+n_{\alpha,\beta_0+\gamma}\}\leq 2.
\end{equation*}
Thus we must have $n_{\beta_0,0}=2$ and $n_{\alpha,\gamma}=0$ for any $\alpha+\gamma=0$, $\alpha\neq 0$ and $\beta_0+\gamma\neq 0$.

On the other hand, if $\alpha+\gamma=0$, $\beta_0+\gamma= 0$. We have $\alpha=\beta_0=-\gamma$. Let $\alpha=\beta_0,\beta=\beta_0,\gamma=-\beta_0$ in equation (\ref{use++}), we obtain
\begin{align}\label{what}
\begin{split}
&\beta_0(\lambda-\mu)F_{2\beta_0,-\beta_0}(\beta_0(\lambda+\mu)+2\beta_0\partial)\\
=&F_{\beta_0,-\beta_0}(\beta_0(\partial+\lambda))F_{\beta_0,0}(\beta_0(\lambda+\partial))
-F_{\beta_0,-\beta_0}(\beta_0(\partial+\mu))F_{\beta_0,0}(\beta_0(\mu+\partial)).
\end{split}
\end{align}
Suppose $n_{\beta_0,-\beta_0}\geq 1$. Notice that $n_{2\beta_0,-\beta_0}\leq 1$. The polynomial degree for $\lambda$ on the right side of equation (\ref{what}) will be larger than 3 but smaller than 2 on the left hand side. Thus we must have $n_{\beta_0,-\beta_0}=0$.

Now we have proved that $n_{\alpha,\gamma}=0$ holds for any $\alpha+\gamma=0$, $\alpha\neq 0$. Hence if $\alpha+\gamma=0$, expression (\ref{whatnow}) becomes
\begin{equation*}
n_{\beta,0}\leq max\{n_{\alpha+\beta,\gamma}+1,\ n_{\beta,\gamma}+n_{\alpha,\beta+\gamma}\}.
\end{equation*}
For any $\beta\neq 0$, we can choose a proper pair of $\alpha$ and $\gamma$ so that $\gamma\neq 0$, $\gamma+\beta\neq 0$ and $\alpha+\beta+\gamma\neq 0$, which in turn gives us $n_{\alpha+\beta,\gamma},n_{\beta,\gamma},n_{\alpha,\beta+\gamma}\leq 1$. Hence for any $\beta\neq 0$, we have obtained
\begin{equation*}
n_{\beta,0}\leq max\{n_{\alpha+\beta,\gamma}+1,\ n_{\beta,\gamma}+n_{\alpha,\beta+\gamma}\}\leq 2
\end{equation*}

Now we have proved that $n_{\alpha,\gamma}=0$ for all $\alpha+\gamma=0$, $\alpha\neq 0$, and $n_{\beta,0}\leq 2$ for all $\beta\neq 0$. Consequently, $F_{\alpha,\gamma}$ can be written as
\begin{equation*}
F_{\alpha,\gamma}((\alpha+\gamma)\lambda+\alpha\partial)=\left\{ \begin{array}{ll}
P_{\alpha,\gamma}((\alpha+\gamma)\lambda+\alpha\partial)+R_{\alpha,\gamma}& \mbox{if $\gamma\neq 0$ and $\alpha+\gamma\neq 0$,}\\[4pt]
\alpha (P_{\alpha,0}(\lambda+\partial)^2+Q_{\alpha,0}(\lambda+\partial)+ R_{\alpha,0})& \mbox{if $\gamma= 0$,}\\[4pt]
\alpha P_{\alpha,\gamma}& \mbox{if $\alpha+\gamma= 0$,}
 \end{array}  \right.
\end{equation*}
where $P_{\alpha,\gamma},Q_{\alpha,\gamma},R_{\alpha,\gamma}\in\C$ are coefficients. The coefficient $\alpha$ added when $\gamma= 0$ and $\alpha+\gamma= 0$ is meant for the convenience of later steps.

As in the proof of Proposition \ref{almost1}, assuming $P_{0,0}=1$, we still have $P_{\alpha,\gamma}=\frac{C_{\alpha+\gamma}}{C_\gamma}$ for a given set of constants $\{C_\alpha\in\C\,|\,C_\alpha\neq 0,\alpha\in\Z\}$. Now use the technique to rescale the $\C[\partial]$-basis of $V$ from $v_\gamma$ to $v'_\gamma=C_\gamma v_\gamma$ again, we obtain a refined form of $F_{\alpha,\gamma}$ as follows
\begin{equation*}
F_{\alpha,\gamma}((\alpha+\gamma)\lambda+\alpha\partial)=\left\{ \begin{array}{ll}
(\alpha+\gamma)\lambda+\alpha\partial+\alpha K_{\alpha,\gamma}& \mbox{if $\gamma\neq 0$ and $\alpha+\gamma\neq 0$,}\\[4pt]
\alpha ((\lambda+\partial)^2+H_{\alpha,0}(\lambda+\partial)+ T_{\alpha,0})& \mbox{if $\gamma= 0$,}\\[4pt]
\alpha & \mbox{if $\alpha+\gamma= 0$,}
 \end{array}  \right.
\end{equation*}
where $K_{\alpha,\gamma},H_{\alpha,0},T_{\alpha,0}\in\C$ are coefficients to be specified.

In equation (\ref{use++}), let $\alpha=1,\beta=1,\gamma=0$, we have
\begin{equation*}
(\lambda-\mu)(\lambda+\mu+2\partial+K_{2,-1})=(\lambda+\partial)^2+H_{1,0}(\lambda+\partial)+ T_{1,0}-((\mu+\partial)^2+H_{1,0}(\mu+\partial)+ T_{1,0}).
\end{equation*}
Thus $H_{1,0}=2K_{2,-1}$.

Similarly, if $\alpha=1,\beta=1,\gamma=-1$, we have $H_{1,0}=H_{2,0}$ and $T_{1,0}=T_{2,0}$.

If $\alpha=1,\beta=2,\gamma=-1$, we get $H_{2,0}=K_{1,1}+K_{2,-1}$ and $T_{2,0}=K_{1,1}K_{2,-1}$.

If $\alpha=1,\beta=2,\gamma=-2$, we obtain $H_{1,0}=K_{1,-2}+K_{2,-1}$.

Suppose $K_{1,1}=D$ for some constant $D$. Then from the relations above, we can deduce that $K_{1,-2}=K_{2,-1}=K_{1,1}=D$, $H_{1,0}=H_{2,0}=2D$ and $T_{1,0}=T_{2,0}=D^2$.

Thus by equation (\ref{use++}) and using induction, we have
\begin{equation*}
F_{\alpha,\gamma}((\alpha+\gamma)\lambda+\alpha\partial)=\left\{ \begin{array}{ll}
(\alpha+\gamma)\lambda+\alpha(\partial+D)& \mbox{if $\gamma\neq 0$ and $\alpha+\gamma\neq 0$},\\[4pt]
\alpha (\lambda+\partial+D)^2& \mbox{if $\gamma= 0$},\\[4pt]
\alpha & \mbox{if $\alpha+\gamma= 0$,}
 \end{array}  \right.
\end{equation*}
for all $\alpha\in\Z^+,\gamma\in\Z$.
By symmetry we can show that the above formula also holds for all $\alpha,\gamma\in\Z$.
Thus $V$ is isomorphic to $V_D$ as a $B$-module.

On the other hand, if $n_{\alpha_0,\gamma_0}\geq 2$ for some $\alpha_0,\gamma_0\neq 0$ such that $\alpha_0+\gamma_0=0$. Similar treatment as above can show us
\[
F_{\alpha,\gamma}((\alpha+\gamma)\lambda+\alpha\partial)=\left\{ \begin{array}{ll}
(\alpha+\gamma)\lambda+\alpha(\partial+D)& \mbox{if $\gamma\neq 0$ and $\alpha+\gamma\neq 0$,}\\[4pt]
\alpha & \mbox{if $\gamma= 0$,}\\[4pt]
\alpha (\partial+D)^2& \mbox{if $\alpha+\gamma= 0$,}
 \end{array}  \right.
\]
for some constant $D\in\C$, which means $V$ is isomorphic to $V'_D$.

Thus we have proved that $V$ must be isomorphic to either $V_D$ or $V'_D$ for some $D\in\C$.
\end{proof}

Summarizing Proposition \ref{almost1} and \ref{almost2}, we conclude that Theorem \ref{thm-mod} holds.

\end{document}